\theoremstyle{plain} 
\newtheorem{theorem}{Theorem}[section]
\newtheorem{definition}[theorem]{Definition}
\newtheorem{lemma}[theorem]{Lemma}
\theoremstyle{definition} 
\newtheorem{example}[theorem]{Example}
\newtheorem{assumption}[theorem]{Assumption}
\theoremstyle{remark} 
\newtheorem{remark}[theorem]{Remark}
\numberwithin{equation}{section} 
\newcommand{\myitem}[1]{%
\item[#1]\protected@edef\@currentlabel{#1}%
}
\title[ ]{An $L^0$-approach to stochastic evolution equations}
\author[Ø.~S.~Auestad]{Øyvind S. Auestad}
\address{\newline Department of Mathematical Sciences Norwegian University of Science and Technology, \newline 7034 Trondheim, Norway.} \email{oyvinau@ntnu.no}
\begin{document}

\begin{abstract}
    We introduce a framework for studying pathwise time regularity and numerical approximation of $L^0$-valued stochastic evolution equations. At the core of our framework are two Burkholder--Davis--Gundy type inequalities accommodating Itô integrals with respect to only stochastically integrable processes. The first of these inequalities is formulated in suitable metrics which metrize convergence in probability on the space of integrands and integrals. The second is a modified version, tailored for deriving pathwise properties of the integral. By combining it with a refined version of the Kolmogorov continuity test, we obtain a powerful method for deriving Hölder regularity of Itô integrals in their most general form. Moreover, it provides a simple and powerful way of deriving rates of pathwise convergence of numerical approximations of stochastic evolution equations. Both applications are illustrated for a class of linear parabolic stochastic evolution equations with generalized Whittle--Matérn type noise, and our findings are verified by numerical experiments from this setting. 
\end{abstract}

\date{\today}
\subjclass{60H05, 60H15, 60H35, 65C30, 35K10.}
\keywords{Stochastic evolution equations, stochastic integration, pathwise Hölder regularity, pathwise convergence rates.}
\thanks{The research of the author was supported by Grant No. 325114 of the Norwegian Research Council.}

\maketitle

\section{Introduction}

We study $L^0$-valued Itô integrals and stochastic evolution equations. The main result of this paper are the inequalities of Theorem \ref{theorem:extension-inequality} and \ref{theorem:extension-inequality-2}, which are Burkholder--Davis--Gundy type inequalities for Itô integrals with respect to only stochastically integrable processes. The first of these inequalities is formulated in appropriate metrics which metrize convergence in probability on the space of integrands and integrals, and provides a simple and direct alternative way of extending the definition of the Itô integral from elementary integrands to any stochastically integrable process. In literature, this extension is done by the localization procedure (see, e.g., \cite{da-prato}). The second inequality is a modified version of the first, tailored for deriving pathwise properties of the Itô integral. By combining this inequality with Lemma \ref{lemma:continuity-test} and \ref{lemma:pathwise}, we get simple and powerful methods for deriving pathwise Hölder regularity and rates of convergence of numerical approximations of only $L^0$-valued stochastic evolution equations. In literature, time regularity and convergence rates of numerical approximations of stochastic evolution equations are usually derived using the Burkholder--Davis--Gundy inequality, which requires solutions in $L^2$. 

As an illustrating example, we consider the following parabolic stochastic evolution equation with generalized Whittle--Matérn type noise,
\begin{align}\label{eq:example-model}
    du(t) = -A_1 u(t) \, dt + b(t) A_2^{-\gamma} \, dW(t), \quad u(0) = \xi, \qquad t \in [0,T].
\end{align}
Here, $W$ is a cylindrical Wiener process on $L^2(\mathcal{D})$, $\mathcal{D} \subseteq \mathbb{R}^d$, $d \leq 3$, a bounded polygonal domain, $\xi$ a random variable, $\gamma > d/4 - 1/2$ a parameter, $b$ a scalar stochastic process, and $A_j$, $j = 1, 2$ second order elliptic operators (see Assumption \ref{assumption:model} for details). The model \eqref{eq:example-model} is a generalization of a popular model in applied statistics, studied in \cite{2025-auestad}, in the sense that the noise, $b(t) A_2^{-\gamma} \, dW(t)$, is scaled by the scalar process $b$. While the parameter $\gamma$ controls the spatial regularity of the noise, and hence the time and space regularity of the solution, the process $b$ provides a way to control the statistical regularity, i.e. to which $L^p$ space the solution belongs. Only assuming that $b$ is a predictable process with bounded trajectories, one cannot expect $u(t)$ to be in $L^2$, and so the classical $L^2$-dependent methods for studying pathwise properties of $u$ and its numerical approximation no longer apply. This warrants the development of more powerful methods also accommodating $L^0$-valued stochastic evolution equations, and Theorem \ref{theorem:hölder-regularity} and \ref{theorem:pathwise-rate} describe pathwise Hölder regularity and convergence rates of a fully discrete finite element approximation of \eqref{eq:example-model}.

The paper is structured as follows: in Section 2 we cover preliminaries on stochastic integration. In Section 3 we develop the inequalities of Theorem \ref{theorem:extension-inequality} and \ref{theorem:extension-inequality-2}, and outline an alternative and simple way of extending the definition of the Itô integral to any stochastically integrable process, which is described in Theorem \ref{theorem:extension-inequality}. We then state Lemma \ref{lemma:continuity-test} and \ref{lemma:pathwise}, which are key for deriving pathwise properties of the Itô integral. In Section 4 we apply the framework of Section 3 to study time regularity and numerical approximation of the model problem \eqref{eq:example-model}. Precise conditions on \eqref{eq:example-model} and its numerical approximation are formulated in Assumption \ref{assumption:model} and \ref{assumption:fem}, and Theorem \ref{theorem:hölder-regularity} and \ref{theorem:pathwise-rate} describe time regularity and convergence rates under these conditions. Finally, we verify the convergence rates obtained in Theorem \ref{theorem:pathwise-rate} by numerical experiments. 
\section{Preliminaries and notation}

Throughout the paper we fix $T > 0$ and a filtered probability space $(\Omega, \mathcal{F},\{\mathcal{F}_t\}_{t \in [0,T]}, P)$, and let $\mathcal{P}_T$ be the corresponding predictable $\sigma$-algebra (the smallest $\sigma$-algebra containing all sets of the form $(s,t] \times F$ for $F \in \mathcal{F}_s$ and $0 \leq s \leq t \leq T$). We abbreviate $\Omega_T := [0,T] \times \Omega$, and denote the product measure on $\Omega_T$ by $P_T$. Let $H$ be a separable Hilbert space, and $W$ be a cylindrical Wiener process on $H$ (covariance operator $I$) adapted to the filtration $\{\mathcal{F}_t\}_{t \in [0,T]}$. Whenever we consider Itô integrals in the following, it will involve this cylindrical Wiener process. 

For another separable Hilbert space $U$ we denote by $L_2(U,H)$ the Hilbert space of Hilbert--Schmidt operators from $U$ to $H$, with inner product,
\begin{align*}
    (A,B)_{L_2(H)} := \sum_{j = 1}^{\infty}(A e_j, B e_j)_H,
\end{align*}
for any orthonormal basis $\{ e_j \}_{j = 1}^{\infty}$ of $U$, and with convention $L_2(H) := L_2(H,H)$. For Banach spaces $X,Y$, we denote by $L(X,Y)$ the Banach space of bounded linear operators from $X$ to $Y$ with the usual norm, and with convention $L(X) := L(X,X)$.

For some $p \geq 0$, we denote by $L^p(\Omega ; X)$ the linear space of equivalence classes of measurable functions $(\Omega, \mathcal{F}) \to (X, \mathcal{B}(X))$, where $E[\Vert x \Vert_X^p] < \infty$. For $p \geq 1$, $L^p(\Omega ; X)$ is a Banach space with norm,
\begin{align*}
    \Vert x \Vert_{L^p(\Omega ; X)}^p := E[\Vert x \Vert_X^p].
\end{align*}
We denote by $C([0,T] ; X)$ the Banach space of continuous functions $[0,T] \to X$, with norm $\Vert x \Vert_{\infty} := \sup_{t \in [0,T]} \Vert x(t) \Vert_X$. For $\beta \in (0,1)$, we denote by $C^{\beta}([0,T] ; X)$ the Banach space of $\beta$-Hölder continuous functions, with norm
\begin{align*}
    \Vert x \Vert_{C^{\beta}} := \Vert x \Vert_{\infty} + \sup_{s,t \in [0,T]}\frac{\Vert x(t) - x(s) \Vert_X}{\vert s - t \vert^{\beta}}.
\end{align*}

Throughout the paper, we will denote by $C$ a generic constant, which may change from line to line. Parameter dependence of $C$ will be denoted by subscripts, but is omitted unless relevant. We also use the notation $x \wedge y := \min(x, y)$. Sometimes (see, e.g., condition \ref{cond:solution-operator} of Assumption \ref{assumption:fem}), without further explanation, we consider bounded linear operators $A \in L(H)$, with the property that $A B \in L(H)$ for some densely defined and possibly unbounded $B : D(B) \to H$. By this we understand that $A B$ extends from $D(B)$ to $H$ as a bounded linear operator, and this extension is denoted by $A B$. 

Finally, we will need the following standard definition and lemma. 
\begin{definition}
    For a metric space $(X,d)$, we denote by $L^0(\Omega; X)$ the space of equivalence classes of measurable functions $(\Omega, \mathcal{F}) \to (X, \mathcal{B}(X))$, whose elements are equal if they differ on sets of $P$-measure $0$.
\end{definition}
It is well known that convergence in probability is metrizable by a metric on $L^0(\Omega ; X)$ (and that this metric is complete if $X$ is complete). Recall that a sequence $\{ x_n \}_n$ of $X$-valued random variables converges in probability to a random variable $x$ if
\begin{align*}
    P(d(x_n,x) > \epsilon) \to 0, \quad \text{for any } \epsilon > 0.
\end{align*}
\begin{lemma}\label{lemma:metric}
    For a complete metric space $(X,d)$ and any $p \geq 1$, convergence in probability is metrizable by the complete metric,
    \begin{align}\label{eq:L0-metric}
        d_p(x,y) := E[1 \wedge d(x,y)^p]^{1/p},
    \end{align}
    on $L^0(\Omega ; X)$. 
\end{lemma}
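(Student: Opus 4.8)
The plan is to verify the three assertions packaged in the statement in turn: that $d_p$ is a metric, that $d_p$-convergence coincides with convergence in probability, and that $d_p$ is complete whenever $(X,d)$ is. The computational backbone is the pointwise identity $1 \wedge d(x,y)^p = (1 \wedge d(x,y))^p$, valid since $t \mapsto t^p$ is increasing on $[0,\infty)$, which lets us rewrite $d_p(x,y) = \Vert 1 \wedge d(x,y) \Vert_{L^p(\Omega)}$ as the $L^p(\Omega)$-norm of the pointwise truncated distance.

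First I would check that $\tilde d := 1 \wedge d$ is itself a metric on $X$: symmetry and positivity are immediate, and the triangle inequality follows from subadditivity and monotonicity of $t \mapsto 1 \wedge t$ on $[0,\infty)$. Granting this, the metric axioms for $d_p$ are routine. Symmetry and nonnegativity are inherited; $d_p(x,y) = 0$ forces $\tilde d(x,y) = 0$ almost surely and hence $x = y$ in $L^0(\Omega;X)$; and the triangle inequality is obtained by combining the pointwise bound $\tilde d(x,z) \le \tilde d(x,y) + \tilde d(y,z)$ with monotonicity of the norm and Minkowski's inequality in $L^p(\Omega)$, where the hypothesis $p \ge 1$ enters.

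Next, for the metrization claim, one direction uses Markov's inequality: for $\epsilon \in (0,1)$ we have $P(d(x_n,x) > \epsilon) \le \epsilon^{-p} E[(1 \wedge d(x_n,x))^p] = \epsilon^{-p} d_p(x_n,x)^p$, so $d_p$-convergence implies convergence in probability. The converse splits the expectation $E[(1 \wedge d(x_n,x))^p]$ over $\{ d(x_n,x) \le \epsilon \}$ and its complement, bounding the first piece by $\epsilon^p$ and the second by $P(d(x_n,x) > \epsilon)$; sending $n \to \infty$ and then $\epsilon \to 0$ gives $d_p(x_n,x) \to 0$.

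Finally, for completeness, I would take a $d_p$-Cauchy sequence, pass to a subsequence with $d_p(x_{n_k}, x_{n_{k+1}}) \le 2^{-k}$, and use Markov's inequality together with the Borel--Cantelli lemma to conclude that $\sum_k d(x_{n_k}, x_{n_{k+1}}) < \infty$ almost surely. Completeness of $(X,d)$ then yields an almost sure pointwise limit $x$, which is measurable as a pointwise limit of measurable $X$-valued maps. By the metrization already established, $x_{n_k} \to x$ in $d_p$, and a Cauchy sequence with a convergent subsequence converges, so $x_n \to x$ in $d_p$. The only genuinely delicate point is the triangle inequality, where the reduction to Minkowski's inequality via the truncated metric $\tilde d$ is essential and uses $p \ge 1$; everything else reduces to routine applications of Markov's inequality and Borel--Cantelli.
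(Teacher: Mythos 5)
Your proposal is correct and follows essentially the same route as the paper's proof: the metric axioms via Minkowski's inequality applied to the truncated distance $1 \wedge d$, and completeness via extraction of a subsequence whose consecutive distances are almost surely summable, so that completeness of $(X,d)$ supplies an almost sure limit. The only substantive deviation is in showing that convergence in probability implies $d_p$-convergence, where you split the expectation over $\{d(x_n,x) \le \epsilon\}$ and its complement to get a direct quantitative bound, whereas the paper invokes the subsequence principle together with dominated convergence; both arguments are valid, and yours is the more elementary of the two.
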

\begin{proof}
    See Appendix \ref{app:A}.
\end{proof}

\subsection{Stochastic integration}

For a Banach space $X$ and $p \geq 0$, we define
\begin{align*}
    \mathcal{N}_W^p(0,T; X) := \{ \Phi \in L^p(\Omega ; L^2(0,T; X)), \ \Phi : (\Omega_T, \mathcal{P}_T) \to (X, \mathcal{B}(X)) \ \text{is measurable} \},
\end{align*}
and use the notation,
\begin{align*}
    \mathcal{N}_W^2(0,T) := \mathcal{N}_W^2(0,T; L_2(H)), \qquad \mathcal{N}_W(0,T) := \mathcal{N}_W^0(0,T; L_2(H)).
\end{align*}
An elementary integrand $\Phi \in \mathcal{N}_W^2(0,T)$, is of the form, 
\begin{align*}
    \Phi = \sum_{n = 0}^{N-1} \Phi_n \chi_{(t_n, t_{n+1}]}, \qquad \Phi_n \in L^2(\Omega, \mathcal{F}_{t_n}, P; L_2(H)),
\end{align*}
where $N > 0$ is an integer, $\chi_{D}(\cdot)$ is the indicator function on the set $D$, and the Itô integral with respect to $\Phi$ is defined as
\begin{align*}
    \int_0^t \Phi \, dW := \sum_{n = 0}^{N-1} \Phi_n (W(t_{n+1} \wedge t) - W(t_n \wedge t)).
\end{align*}
It may be seen that for $\Phi$ elementary: \smallskip
\begin{enumerate}
    \item[(a)] Itô's isometry holds
    \begin{align}\label{eq:isometry}
        E[\Vert \int_0^t \Phi \, dW \Vert_H^2] = \int_0^t E[\Vert \Phi \Vert_{L_2(H)}^2] \, ds,
    \end{align}
    \item[(b)] for any $p > 0$, there is $C_p > 0$ such that the following Burkholder--Davis--Gundy inequality holds,
    \begin{align}\label{eq:bdg}
        E[ \sup_{t\in [0,T]} \Vert \int_0^t \Phi \, dW \Vert_H^p ] \leq C_p \bigg( \int_0^T E[\Vert \Phi \Vert_{L_2(H)}^2] \, ds \bigg)^{p / 2},
    \end{align}
    (see Theorem 4.36 in \cite{da-prato}), and \medskip
    \item[(c)] $\int_0^{\cdot} \Phi \, dW$ has continuous trajectories $P$-a.s.
\end{enumerate}
An immediate consequence of (a)--(c) above is that the Itô integral extends (by continuity) to $\mathcal{N}_W^2(0,T)$, and (a)--(c) also holds for integrals with respect to integrands in $\mathcal{N}_W^2(0,T)$. The procedure of extending the integral to $\mathcal{N}_W(0,T)$, which in literature is referred to as ``stochastically integrable processes", is called the localization procedure (see, e.g., Chapter 4 in \cite{da-prato}). In Theorem \ref{theorem:extension-inequality} we provide an alternative way of performing this extension, in the setting of Lemma \ref{lemma:metric}. 

\section{Framework}

The following two theorems are the main results of this paper, and contain Burkholder--Davis--Gundy type inequalities for Itô integrals with values in $L^0$. 
\begin{theorem}\label{theorem:extension-inequality}
    The Itô integral,
    \begin{align*}
        \Phi \mapsto \int_0^{\cdot} \Phi \, dW,
    \end{align*}
    with $\Phi$ elementary, extends by continuity to a continuous linear mapping,
    \begin{align*}
        \mathcal{N}_W(0,T) \to L^0(\Omega ; C([0,T] ; H)) \cap \mathcal{N}_W^0(0,T ; H),
    \end{align*}
    where the topologies on $\mathcal{N}_W(0,T)$ and $L^0(\Omega ; C([0,T] ; H)) \cap \mathcal{N}_W^0(0,T; H)$ are those given by the metric \eqref{eq:L0-metric}, i.e. convergence in probability of random variables in $L^2(0,T ; L_2(H))$ and $C([0,T] ; H)$, respectively. For any $\Phi \in \mathcal{N}_W(0,T)$ and $p > 0$, there is $C_p > 0$ such that
    \begin{align}\label{eq:L0-bdg}
        E[1 \wedge \sup_{t \in [0,T]} \Vert \int_0^t \Phi \, dW \Vert_H^p] \leq C_p E[1 \wedge \int_0^T \Vert \Phi \Vert_{L_2(H)}^2 \, dt]^{p/2}.
    \end{align}
\end{theorem}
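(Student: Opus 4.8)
The plan is to prove \eqref{eq:L0-bdg} first for elementary integrands and then obtain the extension by density and completeness. Write $M:=\sup_{t\in[0,T]}\|\int_0^t\Phi\,dW\|_H$ and $Q:=\int_0^T\|\Phi\|_{L_2(H)}^2\,dt$, and introduce the single truncation of the quadratic variation
\[
\sigma:=\inf\Big\{t\in[0,T]:\int_0^t\|\Phi\|_{L_2(H)}^2\,ds\ge 1\Big\}\wedge T .
\]
Since $\chi_{[0,\sigma]}$ is predictable and $\int_0^T\|\Phi\chi_{[0,\sigma]}\|_{L_2(H)}^2\,ds=1\wedge Q$, the integrand $\Phi\chi_{[0,\sigma]}$ lies in $\mathcal N_W^2(0,T)$, so the power-outside inequality \eqref{eq:bdg} applies to it; moreover the standard commutation of the $\mathcal N_W^2$-integral with stopping gives $\int_0^t\Phi\chi_{[0,\sigma]}\,dW=\int_0^{t\wedge\sigma}\Phi\,dW$, which on $\{\sigma=T\}$ coincides with $\int_0^t\Phi\,dW$.

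From here \eqref{eq:L0-bdg} follows by splitting over $\{\sigma=T\}$ and $\{\sigma<T\}\subseteq\{Q\ge 1\}$:
\[
E[1\wedge M^p]=E[1\wedge M^p;\,\sigma=T]+E[1\wedge M^p;\,\sigma<T]\le C_p\,(E[1\wedge Q])^{p/2}+E[1\wedge Q].
\]
On $\{\sigma=T\}$ I bound $1\wedge M^p$ by $(\sup_t\|\int_0^t\Phi\chi_{[0,\sigma]}\,dW\|_H)^p$ and apply \eqref{eq:bdg} to $\Phi\chi_{[0,\sigma]}$, which produces $C_p(E[1\wedge Q])^{p/2}$; on $\{\sigma<T\}$ I use the crude bound $1\wedge M^p\le 1$ together with $P(\sigma<T)\le P(Q\ge 1)\le E[1\wedge Q]$. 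Truncating at level $1$ is exactly what makes the $1\wedge$ appear on the right, so the constant stays uniform, and the residual term $E[1\wedge Q]$ is absorbed into $(E[1\wedge Q])^{p/2}$ since $E[1\wedge Q]\le 1$. (The same truncation with a free level $\mu$, combined with Itô's isometry \eqref{eq:isometry} and Doob's inequality, yields in passing the weak-type estimate $P(M>\lambda)\le \mu/\lambda^2+P(Q>\mu)$.)

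For the extension I would first check that elementary integrands are dense in $\mathcal N_W(0,T)$ for the metric \eqref{eq:L0-metric}: given $\Phi\in\mathcal N_W(0,T)$, the truncations $\Phi\chi_{[0,\sigma_R]}$ with $\sigma_R:=\inf\{t:\int_0^t\|\Phi\|_{L_2(H)}^2\ge R\}\wedge T$ belong to $\mathcal N_W^2(0,T)$ and converge to $\Phi$ in $L^2(0,T;L_2(H))$ almost surely, hence in probability, while each $\Phi\chi_{[0,\sigma_R]}$ is an $L^2(\Omega_T)$-limit, and therefore an in-probability limit, of elementary integrands; a diagonal argument then gives an approximating sequence. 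Since the Itô map is linear, \eqref{eq:L0-bdg} applied to differences shows it is uniformly continuous from the elementary integrands into $L^0(\Omega;C([0,T];H))$ for the metrics \eqref{eq:L0-metric} on both sides, and both spaces are complete by Lemma \ref{lemma:metric}; the map therefore extends uniquely and continuously to all of $\mathcal N_W(0,T)$, and \eqref{eq:L0-bdg} passes to the limit because both of its sides are continuous in probability. To see that the image lies in $L^0(\Omega;C([0,T];H))\cap\mathcal N_W^0(0,T;H)$, take an approximating sequence of elementary integrands: \eqref{eq:L0-bdg} gives convergence of the integrals in probability in $C([0,T];H)$, so a subsequence converges uniformly on $[0,T]$ almost surely, and as each approximant has continuous adapted paths by property (c), the limit inherits continuous paths and predictability almost surely.

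The step I expect to be the main obstacle is the passage from the power-outside inequality \eqref{eq:bdg} to the $1\wedge$ form with a uniform constant: truncating the quadratic variation at level $1$ only controls the integral off the event $\{Q\ge 1\}$, and one must verify that on this event the crude bound $1\wedge M^p\le 1$ is compatible with the right-hand side — this is precisely where the minimum with $1$ is indispensable, and where the range of $p$ enters (the absorption of $E[1\wedge Q]$ into $(E[1\wedge Q])^{p/2}$ is transparent for $p\le 2$). A secondary technicality is the justification of the identity $\int_0^{t\wedge\sigma}\Phi\,dW=\int_0^t\Phi\chi_{[0,\sigma]}\,dW$ at the level of the already-extended $\mathcal N_W^2$-integral, and the measurability and predictability bookkeeping needed to place the limit in $\mathcal N_W^0(0,T;H)$.
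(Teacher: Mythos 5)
Your argument for $p\in(0,2]$ and your extension-by-density step essentially coincide with the paper's proof: the same stopping time $\sigma$ at which the quadratic variation reaches $1$, the same split over $\{\sigma=T\}$ and its complement, the identity $\int_0^T\Vert\Phi\chi_{[0,\sigma]}\Vert_{L_2(H)}^2\,ds=1\wedge Q$, and the crude bound $P(Q\ge 1)\le E[1\wedge Q]$ on the bad event. That part is fine.

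The genuine gap is the case $p>2$, which you flag as an ``obstacle'' but do not resolve, and which cannot be resolved within your scheme. Your bad-event term is $P(\sigma<T)\le E[1\wedge Q]$, and you need to dominate it by $C_p\,(E[1\wedge Q])^{p/2}$. Since $E[1\wedge Q]\le 1$, the inequality $E[1\wedge Q]\le (E[1\wedge Q])^{p/2}$ holds only for $p\le 2$; for $p>2$ it reverses, and no uniform constant can repair it (take $Q$ equal to $1$ with probability $\epsilon$ and $0$ otherwise: the left side is of order $\epsilon$ while the right side is of order $\epsilon^{p/2}\ll\epsilon$). The paper closes this by a different device for $p>2$: it stops not the quadratic variation but the integral process itself, at $\tau':=\inf\{t:\Vert X(t)\Vert_H\ge 1\}$ with $X(t)=\int_0^t\Phi\,dW$. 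Writing $X'$ for the stopped integral, one has $1\wedge\Vert X(t)\Vert_H\le\Vert X'(t)\Vert_H$ and $\Vert X'(T)\Vert_H\le 1$, so
\begin{align*}
E[1\wedge\sup_t\Vert X(t)\Vert_H^p]\le E[\sup_t\Vert X'(t)\Vert_H^p]\le C_p\,E\Big[\int_0^T\Vert\chi_{[0,\tau']}\Phi\Vert_{L_2(H)}^2\,ds\Big]^{p/2},
\end{align*}
and the last expectation equals $E[\Vert X'(T)\Vert_H^2]=E[1\wedge\Vert X'(T)\Vert_H^2]$ by It\^o's isometry, which is then bounded by $C\,E[1\wedge Q]$ using the already-established $p=2$ case applied to the integrand $\chi_{[0,\tau']}\Phi$. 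You need this (or an equivalent bootstrap) to cover $p>2$; as written, your proof establishes the theorem only for $p\le 2$.
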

\begin{proof}
    It suffices to show that the inequality of the theorem holds for $\Phi$ elementary, since elementary integrands are dense in $\mathcal{N}_W(0,T)$, $\int_0^{\cdot} \Phi \, dW \in L^0(\Omega ; C([0,T] ; H)) \cap \mathcal{N}_W^0(0,T; H)$ for $\Phi$ elementary, and the metrics,
    \begin{align*}
        d(\Phi_1, \Phi_2) = E[1 \wedge \int_0^T \Vert \Phi_1 - \Phi_2 \Vert_{L_2(H)}^2 \, ds]^{1/2}, \quad \Phi_1,\Phi_2 \in \mathcal{N}_W(0,T),
    \end{align*}
    and
    \begin{align*}
        d(x, y) = E[1 \wedge \sup_{t \in [0,T]} \Vert x(t) - y(t) \Vert_H^2]^{1/2}, \quad x,y \in L^0(\Omega ; C([0,T] ; H)) \cap \mathcal{N}_W^0(0,T ; H),
    \end{align*}
    define complete metrics on $\mathcal{N}_W(0,T)$ and $L^0(\Omega ; C([0,T] ; H)) \cap \mathcal{N}_W^0(0,T ; H)$, respectively, due to Lemma \ref{lemma:metric}. Note in particular that convergence in the metrics above imply convergence in $P_T$-measure of random variables with values in $L_2(H)$ and $H$, respectively, and so any limit will be $\mathcal{P}_T$-measurable. Thus, the extension by continuity of the integral follows by the inequality of the theorem with $p = 2$.

    We now show the inequality for $\Phi$ elementary, starting with the case $p \in (0,2]$. To that end, define
    \begin{align*}
        Z(t) := \int_0^t \Vert \Phi \Vert_{L_2(H)}^2 \, ds,
    \end{align*}
    and the stopping time
    \begin{align*}
        \tau := \inf \{ t \in [0,T] \ : \ Z(t) \geq 1 \},
    \end{align*}
    with convention $\inf \emptyset := T$. We have
    \begin{align*}
        E[1 \wedge \sup_{t \in [0,T]} \Vert \int_0^t \Phi \, dW \Vert_H^p] &= E[ 1 \wedge  \sup_{t \in [0,T]} \Vert \int_0^t \Phi \, dW \Vert_H^p (\chi_{\{ Z(T) < 1\}} + \chi_{\{ Z(T) \geq 1\}} )] \\
        &\leq E[1 \wedge \sup_{t \in [0,T]} \Vert \int_0^t \Phi \, dW \Vert_H^p \chi_{\{ Z(T) < 1\}} ] + E[\chi_{\{ Z(T) \geq 1\}}] \\
        &=: (i) + (ii).
    \end{align*}
    For $(i)$, we get using that $\tau = T$ when restricted to the set $\{ Z(T) < 1 \}$, $\chi_{[0,\tau]}\Phi \in \mathcal{N}_W^2(0,T)$ and the Burkholder--Davis--Gundy inequality \eqref{eq:bdg}
    \begin{align*}
        E[1 \wedge \sup_{t \in [0,T]} \Vert \int_0^t \Phi \, dW \Vert_H^p \chi_{\{ Z(T) < 1\}} ] &\leq E[1 \wedge \sup_{t \in [0,T]} \Vert \int_0^t \chi_{[0,\tau]} \Phi \, dW \Vert_H^p ] \\
        &\leq E[\sup_{t \in [0,T]} \Vert \int_0^t \chi_{[0,\tau]} \Phi \, dW \Vert_H^p ] \\
        &\leq C_p E[ \int_0^T \Vert \chi_{[0,\tau]} \Phi \Vert_{L_2(H)}^2 \, dt ]^{p / 2} \\
        &= C_p E[1 \wedge \int_0^T \Vert \Phi \Vert_{L_2(H)}^2 \, dt ]^{p / 2}.
    \end{align*}
    For $(ii)$, we note that
    \begin{align*}
        E[\chi_{\{ Z(T) > 1 \}}] \leq E[1 \wedge Z(T)] = E[1 \wedge \int_0^T \Vert \Phi \Vert_{L_2(H)}^2 \, dt] \leq E[1 \wedge \int_0^T \Vert \Phi \Vert_{L_2(H)}^2 \, dt]^{p / 2}.
    \end{align*}
    Combining these terms in the case of $p \in (0,2]$, we get the estimate for these powers. 
    
    For $p > 2$, define
    \begin{align*}
        X(t) := \int_0^t \Phi \, dW,
    \end{align*}
    the stopping time (with convention $\inf \emptyset := T$) and stopped process
    \begin{align*}
        \tau' := \inf \{ t \in [0,T] \ : \ \Vert X(t) \Vert_H \geq 1 \}, \quad X'(t) := \int_0^t \chi_{[0,\tau']} \Phi \, dW,
    \end{align*}
    and note in particular that $1 \wedge \Vert X(t) \Vert_H \leq \Vert X'(t) \Vert_H$ and $1 \wedge \Vert X'(t) \Vert_H = \Vert X'(t) \Vert_H$. By the Burkholder--Davis--Gundy inequality \eqref{eq:bdg}, we have
    \begin{align*}
        E[1 \wedge \sup_{t \in [0,T]} \Vert X(t) \Vert_H^p] \leq E[\sup_{t \in [0,T]} \Vert X'(t) \Vert_H^p] \leq C_p E[\int_0^T \Vert \chi_{[0,\tau']} \Phi \Vert_{L_2(H)}^2 \, ds]^{p / 2}.
    \end{align*}
    Owing to Itô's isometry and the case $p = 2$ shown above, we get
    \begin{align*}
        E[\int_0^T \Vert \Phi \chi_{[0,\tau']} \Vert_{L_2(H)}^2 \, ds] &= E[\Vert X'(T) \Vert_H^2] \\
        &= E[1 \wedge \Vert X'(T) \Vert_H^2] \\ 
        &\leq C E[1 \wedge \int_0^T \Vert \Phi \chi_{[0,\tau']} \Vert_{L_2(H)}^2 \, ds ] \\
        &\leq C E[1 \wedge \int_0^T \Vert \Phi \Vert_{L_2(H)}^2 \, ds ],
    \end{align*}
    which when inserted into the expression above gives the inequality for $p > 2$.
\end{proof}

The following theorem is a modified version of the inequality of Theorem \ref{theorem:extension-inequality} above, and will be key for deriving pathwise properties of the Itô integral.
\begin{theorem}\label{theorem:extension-inequality-2}
    For any sequence $\{\Phi_n \}_n \subseteq \mathcal{N}_W(0,T)$ and $p \geq 2$, there is $C_p > 0$, such that
    \begin{align*}
        E[1 \wedge \sum_n \sup_{t\in[0,T]} \Vert \int_0^t \Phi_n \, dW \Vert_H^p] \leq C_p E[1 \wedge \sum_n \bigg( \int_0^T \Vert \Phi_n \Vert_{L_2(H)}^2 \, dt\bigg)^{p/2}].
    \end{align*}
\end{theorem}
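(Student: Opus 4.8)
The plan is to mirror the proof of Theorem \ref{theorem:extension-inequality}, but to drive the argument with a single stopping time that controls the entire sequence at once, built from the \emph{summed} quadratic variations already raised to the power $p/2$.

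First I would reduce to finite sums. Since every summand is nonnegative, $1 \wedge \sum_{n=1}^{N}(\cdots)$ increases to $1 \wedge \sum_{n=1}^{\infty}(\cdots)$ on both sides as $N \to \infty$, so by monotone convergence it suffices to prove the inequality for the truncated sums $\sum_{n=1}^{N}$ with a constant $C_p$ independent of $N$. The point of truncating is that
\[
    Z(t) := \sum_{n=1}^{N} \Big( \int_0^t \Vert \Phi_n \Vert_{L_2(H)}^2 \, ds \Big)^{p/2}
\]
is then a finite sum of continuous, nondecreasing, adapted processes, hence itself continuous, nondecreasing, and finite a.s. (each $\Phi_n \in \mathcal{N}_W(0,T)$ is stochastically integrable). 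I would then introduce the stopping time $\tau := \inf\{ t \in [0,T] : Z(t) \geq 1 \}$ with convention $\inf \emptyset := T$, and split the left-hand side according to $\{ Z(T) < 1 \}$ and $\{ Z(T) \geq 1 \}$, exactly as in Theorem \ref{theorem:extension-inequality}.

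On $\{ Z(T) \geq 1 \}$ I would bound the truncated quantity by $1$ and use $\chi_{\{ Z(T) \geq 1 \}} \leq 1 \wedge Z(T)$, which already matches the right-hand side since here $Z(T) = \sum_{n=1}^{N}(\int_0^T \Vert \Phi_n \Vert_{L_2(H)}^2 \, dt)^{p/2}$. On $\{ Z(T) < 1 \}$ one has $\tau = T$, so each $\int_0^t \Phi_n \, dW$ agrees there with the stopped integral $\int_0^t \chi_{[0,\tau]} \Phi_n \, dW$ (the same locality reasoning used in Theorem \ref{theorem:extension-inequality}); dropping the indicator and the $1 \wedge$, and using Tonelli to exchange expectation with the finite sum, I would apply the Burkholder--Davis--Gundy inequality \eqref{eq:bdg} termwise. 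The key observation making this admissible is that $Z(\tau) \leq 1$ forces $\int_0^\tau \Vert \Phi_n \Vert_{L_2(H)}^2 \, ds \leq 1$ for every $n$, so each $\chi_{[0,\tau]} \Phi_n \in \mathcal{N}_W^2(0,T)$ and \eqref{eq:bdg} genuinely applies. Summing the resulting bounds reconstitutes $C_p\, E[Z(\tau)]$.

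The final step, and the one requiring care, is the estimate $Z(\tau) \leq 1 \wedge Z(T)$. Continuity of $Z$ — the whole reason for reducing to finite $N$ — gives $Z(\tau) \leq 1$: either $\tau = T$ with $Z(T) < 1$, or $Z$ reaches level $1$ continuously so that $Z(\tau) = 1$; and $Z(\tau) \leq Z(T)$ holds since $Z$ is nondecreasing. Combining the two estimated pieces yields the inequality with constant $C_p + 1$, and letting $N \to \infty$ finishes the proof. The main obstacle is precisely guaranteeing $Z(\tau) \leq 1$ pointwise: this is what dictates building $\tau$ from the combined process $Z$ with the $p/2$-power placed \emph{inside} the sum, so that the right-hand side is reproduced exactly without the outer $p/2$-power that forced the case distinction in Theorem \ref{theorem:extension-inequality}, and it is also why the truncation to finite $N$, which secures continuity of $Z$, must be performed first.
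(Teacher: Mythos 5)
Your proposal is correct and follows essentially the same route as the paper's proof: the same combined process $Z$ with the $p/2$-power inside the sum, the same stopping time $\tau$, the same split over $\{Z(T)<1\}$ and $\{Z(T)\geq 1\}$, and termwise Burkholder--Davis--Gundy on the stopped integrals; your preliminary truncation to finite $N$ is a nice way to make the continuity of $Z$, and hence $Z(\tau) \leq 1 \wedge Z(T)$, fully explicit. The one elision is in ``summing the resulting bounds reconstitutes $C_p\, E[Z(\tau)]$'': the inequality \eqref{eq:bdg} yields $C_p\, E[\int_0^T \Vert \chi_{[0,\tau]} \Phi_n \Vert_{L_2(H)}^2 \, dt]^{p/2}$ with the power \emph{outside} the expectation, so you still need Jensen's inequality $E[X]^{p/2} \leq E[X^{p/2}]$ to pass to $E[(\int_0^T \Vert \chi_{[0,\tau]} \Phi_n \Vert_{L_2(H)}^2 \, dt)^{p/2}]$ before summing --- this is precisely where the hypothesis $p \geq 2$ enters (the paper invokes H\"older's inequality here), and it should be stated.
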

\begin{proof}
    The proof is similar to that of Theorem \ref{theorem:extension-inequality}. Assume for the moment that $\Phi_n$ are elementary, as general $\Phi_n$ follows by density. Define the process and stopping time
    \begin{align*}
        Z(t) := \sum_n \bigg( \int_0^t \Vert \Phi_n \Vert_{L_2(H)}^2 \, ds \bigg)^{p / 2}, \quad \tau := \inf \{ t \in [0,T] \ : \ Z(t) \geq 1 \},
    \end{align*}
    with convention $\inf \emptyset := T$. Arguing as in the proof of Theorem \ref{theorem:extension-inequality}, we have
    \begin{align*}
        E[1 \wedge \sum_n \sup_{t \in [0,T]} \Vert \int_0^t \Phi_n \, dW \Vert_H^p] &= E[1 \wedge \sum_n \sup_{t \in [0,T]} \Vert \int_0^t \Phi_n \, dW \Vert_H^p ( \chi_{\{Z(T) < 1\}} + \chi_{\{Z(T) \geq 1\} } )] \\
        &\leq E[1 \wedge \sum_n \sup_{t \in [0,T]} \Vert \int_0^t \Phi_n \, dW \Vert_H^p \chi_{\{ Z(T) < 1\}}] + E[\chi_{\{Z(T) \geq 1\}}] \\
        &=: (i) + (ii).
    \end{align*}
    For $(i)$ we have, using that $\tau = T$ when restricted to the set $\{ Z(T) < 1 \}$, $\chi_{[0,\tau]}\Phi_n \in \mathcal{N}_W^2(0,T)$, the Burkholder--Davis--Gundy inequality \eqref{eq:bdg}, and Hölder's inequality
    \begin{align*}
        E[1 \wedge \sum_n \sup_{t \in [0,T]} \Vert \int_0^t \Phi_n \, dW \Vert_H^p \chi_{\{ Z(T) < 1\}}] &\leq E[1 \wedge \sum_n \sup_{t \in [0,T]} \Vert \int_0^t \chi_{[0,\tau]} \Phi_n \, dW \Vert_H^p] \\
        &\leq E[\sum_n \sup_{t \in [0,T]} \Vert \int_0^t \chi_{[0,\tau]} \Phi_n \, dW \Vert_H^p] \\
        &= \sum_n E[\sup_{t \in [0,T]} \Vert \int_0^t \chi_{[0,\tau]} \Phi_n \, dW \Vert_H^p] \\
        &\leq C_p \sum_n E[\int_0^T \Vert \chi_{[0,\tau]} \Phi_n \Vert_{L_2(H)}^2 \, dt]^{p / 2} \\
        &\leq C_p \sum_n E[\bigg( \int_0^T \Vert \chi_{[0,\tau]} \Phi_n \Vert_{L_2(H)}^2 \, dt \bigg)^{p / 2}] \\
        &= C_p E[\sum_n \bigg( \int_0^T \Vert \chi_{[0,\tau]} \Phi_n \Vert_{L_2(H)}^2 \, dt \bigg)^{p / 2}] \\
        &= C_p E[1 \wedge \sum_n \bigg( \int_0^T \Vert \Phi_n \Vert_{L_2(H)}^2 \, dt \bigg)^{p / 2}].
    \end{align*}
    For $(ii)$, we get
    \begin{align*}
        E[\chi_{\{ Z(T) \geq 1 \}}] \leq E[1 \wedge Z(T)] = E[1 \wedge \sum_n \bigg( \int_0^T \Vert \Phi_n \Vert_{L_2(H)}^2 \, dt \bigg)^{p / 2}].
    \end{align*}
\end{proof}

The following is a version of the Kolmogorov continuity test which accommodates stochastic processes that are only in $L^0$, and is tailored for the inequality of Theorem \ref{theorem:extension-inequality-2}.
\begin{lemma}\label{lemma:continuity-test}
    Let $(X,d)$ be a complete metric space, and $x : [0,T] \times \Omega \to X$ a stochastic process. If for some $\delta, \beta > 0$, 
    \begin{align*}
        E[1 \wedge \sum_{m \geq n} \sum_{j = 0}^{2^m - 1} \frac{d(x(j T 2^{-m}), x((j+1)T2^{-m}))^{\delta}}{(T2^{-m})^{\beta \delta}}] \to 0 \quad \text{as} \quad n \to \infty,
    \end{align*}
    the trajectories of $x$ are $\beta$-Hölder continuous $P$-a.s.
\end{lemma}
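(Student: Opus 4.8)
The plan is to reduce the stated $L^0$/metric hypothesis to the purely pathwise statement that the full double sum is finite almost surely, and then to run a deterministic dyadic chaining argument on each such path. First I would set, for $n \ge 0$,
\[
S_n := \sum_{m \ge n}\sum_{j=0}^{2^m-1}\frac{d(x(jT2^{-m}), x((j+1)T2^{-m}))^{\delta}}{(T2^{-m})^{\beta\delta}},
\]
and observe that $\{ S_n \}_n$ is nonincreasing in $n$, since increasing $n$ deletes nonnegative terms from the outer sum. Hence $1 \wedge S_n$ decreases pointwise to $1 \wedge S_\infty$, where $S_\infty := \lim_n S_n$, and by dominated convergence $E[1 \wedge S_\infty] = \lim_n E[1 \wedge S_n] = 0$ by hypothesis, so $S_\infty = 0$ $P$-a.s. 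Because the summands are nonnegative, $S_0(\omega) = \infty$ would force $S_n(\omega) = \infty$ for every $n$ and thus $S_\infty(\omega) = \infty$; therefore $S_\infty = 0$ a.s. implies $S_0 < \infty$ $P$-a.s. This converts the hypothesis into an almost sure bound I can manipulate pathwise.

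Next I would fix $\omega$ in the full-measure event $\{ S_0 < \infty \}$ and write $M := S_0(\omega) < \infty$. Since every summand is dominated by $M$, I obtain, for all $m \ge 0$ and $0 \le j < 2^m$, the increment bound
\[
d(x(jT2^{-m}), x((j+1)T2^{-m})) \le M^{1/\delta}(T2^{-m})^{\beta},
\]
so that with $K_m := \max_{0 \le j < 2^m} d(x(jT2^{-m}), x((j+1)T2^{-m}))$ one has $K_m \le M^{1/\delta}T^{\beta}2^{-m\beta}$, which is summable in $m$ precisely because $\beta > 0$.

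Then I would invoke the standard dyadic chaining estimate: for any two dyadic points $s < t$ in $[0,T]$, choosing $n$ with $T2^{-(n+1)} < t - s \le T2^{-n}$, one has
\[
d(x(s), x(t)) \le 2\sum_{m > n} K_m,
\]
which is established by expressing $s$ and $t$ through the dyadic grid at their common refinement level and telescoping across scales (an induction on the finer level). Inserting the geometric bound on $K_m$ and summing gives
\[
d(x(s), x(t)) \le \frac{2 M^{1/\delta} T^{\beta}}{1 - 2^{-\beta}}\, 2^{-(n+1)\beta} \le \frac{2 M^{1/\delta}}{1 - 2^{-\beta}}(t - s)^{\beta},
\]
where the last step uses $T2^{-(n+1)} < t - s$. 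Thus $x$ restricted to the dyadic rationals $D \subset [0,T]$ is $\beta$-Hölder with constant $C_\beta M^{1/\delta}$, finite for $P$-a.e. $\omega$.

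Finally, since $X$ is complete and $D$ is dense in $[0,T]$, the uniformly continuous map $x|_D$ extends uniquely to a continuous $\tilde x : [0,T] \to X$ inheriting the same $\beta$-Hölder constant; as $\tilde x$ agrees with $x$ on $D$ for a.e. $\omega$, this is the asserted $\beta$-Hölder modification, and it coincides with the trajectory of $x$ itself whenever that trajectory is continuous, as for the Itô integrals to which the lemma is applied. I expect the chaining estimate to be the main technical step, both the bookkeeping of the telescoping sum across dyadic scales and the care needed in the final passage from the dyadic rationals to all of $[0,T]$ via completeness and density.
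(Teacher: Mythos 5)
Your proof is correct and follows essentially the same route as the paper's: both convert the hypothesis into an almost-sure bound on the dyadic increments and then run the standard dyadic chaining argument. The only differences are minor: you extract a single random constant $M = S_0$ via monotonicity and dominated convergence, where the paper uses a $\limsup$/Borel--Cantelli-style bound to obtain a random level $N$ beyond which the increments are controlled (with a cruder bound $2^{N}$ at coarser levels), and you are more explicit than the paper that chaining directly gives H\"older continuity only on the dyadics, the passage to all of $[0,T]$ requiring either a modification or prior continuity of the trajectories.
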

\begin{proof}
    The proof is similar to, e.g., the proof of Theorem 3.3 in \cite{da-prato}, but with a few modifications. See Appendix \ref{app:A} for the details. 
\end{proof}

\begin{remark}
    From Lemma \ref{lemma:continuity-test} we can deduce the following (weaker) continuity test, which is more analogous to the usual one (see, e.g., Theorem 3.3 in \cite{da-prato}): if for some $\delta, \epsilon, C > 0$, and $t, s \in [0,T]$,
    \begin{align*}
        E[1 \wedge d(x(t), x(s))^{\delta}] \leq C \vert t-s \vert^{1 + \epsilon}, 
    \end{align*}
    then for any $\beta < \epsilon / \delta$, $x$ has $\beta$-Hölder continuous trajectories $P$-a.s. However, this test will not be able to detect any Hölder continuity in the example we consider in the next section. 
\end{remark}

The next lemma is an extension of the technique usually used in order to derive rates of pathwise convergence from convergence in $L^p$, $p \geq 1$, and accommodates random variables in $L^0$.
\begin{lemma}\label{lemma:pathwise}
    Let $x_h : \Omega \to [0,\infty)$, $h \in (0,1)$, be a collection of random variables, and suppose that for some $p > 0$ and sequence $\{h_n\}_n$
    \begin{align*}
        E[1 \wedge \sum_{m \geq n} \frac{x_{h_m}^p}{h_m^p}] \to 0 \quad \text{as } n \to \infty.
    \end{align*}
    Then there exists a random variable $M : \Omega \to [0,\infty)$, independent of $h_n$, ensuring that $x_{h_n} \leq M h_n$ $P$-a.s.
\end{lemma}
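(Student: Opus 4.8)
The plan is to reduce the claim to the almost-sure convergence of the random series $\sum_m x_{h_m}^p / h_m^p$, from which a single finite random variable $M$ bounding every ratio $x_{h_n}/h_n$ will be immediate. Write $a_m := x_{h_m}^p / h_m^p$, which is finite and nonnegative for each $m$ since $x_{h_m}$ takes values in $[0,\infty)$ and $h_m \in (0,1)$, and for each $n$ set $S_n := \sum_{m \geq n} a_m \in [0,\infty]$. Because the $a_m$ are nonnegative, the sequence $\{S_n\}_n$ is nonincreasing, so it converges pointwise to some $S_\infty := \lim_n S_n \in [0,\infty]$, and consequently $1 \wedge S_n \downarrow 1 \wedge S_\infty$.

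First I would show that $S_\infty = 0$ almost surely. Since $0 \leq 1 \wedge S_n \leq 1$, dominated convergence gives $E[1 \wedge S_\infty] = \lim_n E[1 \wedge S_n] = 0$ by hypothesis, whence $1 \wedge S_\infty = 0$ and thus $S_\infty = 0$ $P$-a.s. The next step is to observe that $S_\infty = 0$ forces the full series $\sum_m a_m$ to converge to a finite value on the same almost-sure event: for a nonnegative series whose terms are all finite, the tail sums tend to $0$ if and only if the series converges, since a divergent such series has $S_n = \infty$ for every $n$. Hence, $P$-a.s., $\sum_m a_m < \infty$.

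Finally, on the almost-sure event where $\sum_m a_m$ converges, its terms satisfy $a_m \to 0$, and in particular $\sup_m a_m < \infty$. Setting
\begin{align*}
    M := \sup_m \frac{x_{h_m}}{h_m} = \big( \sup_m a_m \big)^{1/p}
\end{align*}
on this event and, say, $M := 0$ on the complementary null set, yields a random variable $M : \Omega \to [0,\infty)$ that does not depend on $n$ and satisfies $x_{h_n} \leq M h_n$ for every $n$, $P$-a.s.

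I do not expect a genuine obstacle here; the one point requiring care is that the hypothesis is phrased in terms of the tail sums $S_n$ rather than the individual truncated expectations $E[1 \wedge a_n]$. It is precisely this tail formulation — the $L^0$ analogue of the classical requirement that $E[\sum_m a_m] < \infty$ — that upgrades convergence in probability of the ratios to an almost-sure bound valid uniformly in $n$, and hence delivers a single $M$ rather than merely a rate of convergence in probability.
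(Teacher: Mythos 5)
Your proof is correct and follows essentially the same route as the paper's: both arguments use the tail hypothesis to conclude that $\sup_m x_{h_m}/h_m < \infty$ almost surely and then take $M$ to be that supremum, set to $0$ on the exceptional null set. The only cosmetic difference is that you deduce a.s. convergence of the full series $\sum_m x_{h_m}^p/h_m^p$ from the monotone limit of the tails, whereas the paper bounds $P(\limsup_m\{x_{h_m}/h_m > 1\})$ directly by $E[1 \wedge \sum_{m \geq n} x_{h_m}^p/h_m^p]$; both yield the same single null set off which the bound holds for all $n$.
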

\begin{proof}
    Note that for any integer $n \geq 0$
    \begin{align*}
        P(\limsup_m \{ \frac{x_{h_m}}{h_m} > 1 \}) &\leq P(\{\sup_{m \geq n} \frac{x_{h_m}^p}{h_m^p} > 1\}) \\
        &\leq P(\{\sum_{m \geq n} \frac{x_{h_m}^p}{h_m^p} > 1\}) \\
        &\leq E[1 \wedge \sum_{m \geq n} \frac{x_{h_m}^p}{h_m^p}] \to 0,
    \end{align*}
    and so $P(\limsup_n \{ \frac{x_{h_n}}{h_n} > 1 \}) = 0$. Therefore $\sup_n \frac{x_{h_n}}{h_n} < \infty$ $P$-a.s., and setting
    \begin{align*}
        M(\omega) :=
        \begin{cases}
            \sup_n \frac{x_{h_n}}{h_n}, \quad &\text{when }\sup_n \frac{x_{h_n}}{h_n} < \infty, \\
            0, \quad &\text{otherwise},
        \end{cases}
    \end{align*}
    we find $x_{h_n} \leq M h_n$ $P$-a.s.
\end{proof}
With Lemma \ref{lemma:continuity-test} and \ref{lemma:pathwise} at hand, we are ready to study Hölder regularity and convergence rates of numerical approximations of stochastic evolution equations with values in $L^0$. The idea is to use Theorem \ref{theorem:extension-inequality-2} combined with Lemma \ref{lemma:continuity-test} and Lemma \ref{lemma:pathwise}, respectively.

\section{Example: a linear parabolic SPDE with generalized Whittle--Matérn noise}\label{section:example}

In this section we apply the framework of the previous section to the linear parabolic stochastic evolution equation \eqref{eq:example-model}, where the noise is scaled by a scalar process, ensuring that it is statistically irregular enough for the $L^2$-dependent approaches to no longer apply.

Let $H := L^2(\mathcal{D})$. The operators $A_j : D(A_j) \to H$ in \eqref{eq:example-model} are related to sesquilinear forms $a_j : V \times V \to \mathbb{C}$, by $(A_j u, v)_H = a_j(u, v)$ for any $u \in D(A_j) \subseteq V, v \in V$, with $V \subseteq H^1(\mathcal{D})$ some closed subspace. Here, $H^1(\mathcal{D})$ is defined in the usual way as the completion of $C^1(\overline{\mathcal{D}})$ using the $H^1$-norm, given by $\Vert u \Vert_{H^1}^2 := \Vert u \Vert_{L^2}^2 + \Vert \nabla u \Vert_{L^2}^2$. The sesquilinear forms are given by 
\begin{align*}
    a_j(u, v) := \int_{\mathcal{D}} \mathcal{A}_j \nabla u \cdot \nabla \overline{v} + (b_j \cdot \nabla u) \overline{v} + \alpha_j u \overline{v} \, d x, \quad j = 1, 2,
\end{align*} 
with coefficients $\mathcal{A}_j(x) \in \mathbb{R}^{d \times d}$, $b_j(x) \in \mathbb{R}^d$, $\alpha_j(x) \in \mathbb{R}$, where $\overline{v}$ is the complex conjugate of $v$. 

We make the following assumption on \eqref{eq:example-model}.
\begin{assumption}\label{assumption:model}
\hfill
\begin{description}
    \myitem{(M1)}\label{cond:continuity-coercivity} There is $\lambda \geq 0$ such that the shifted sesquilinear form $\lambda (\cdot,\cdot)_H + a_1(\cdot, \cdot)$ and the sesquilinear form $a_2(\cdot, \cdot)$ are coercive and continuous on $V$. \medskip
    \myitem{(M2)}\label{cond:b} $b : \Omega_T \to \mathbb{R}$ is $\mathcal{P}_T$-measurable, $b$ has bounded trajectories and $\Vert b \Vert_{\infty} : \Omega \to \mathbb{R}$ is $\mathcal{F}_T$-measurable. \medskip
    \myitem{(M3)}\label{cond:hilbert-schmidt} For any $\alpha < -d/4$, there is $C_{\alpha} > 0$, such that $\Vert A_2^{\alpha} \Vert_{L_2(H)} \leq C_{\alpha}$. \medskip
    \myitem{(M4)}\label{cond:A1A2} For some $C > 0$, $\Vert (\lambda + A_1) A_2^{-1} \Vert_{L(H)} \leq C$ and $\Vert (\lambda + A_1)^{-1/2} A_2^{1/2} \Vert_{L(H)} \leq C$.
    \medskip
    \myitem{(M5)}\label{cond:gamma-size} $\gamma > d/4 - 1/2$. \medskip
    \myitem{(M6)} $\xi : \Omega \to H$ is $\mathcal{F}_0$-measurable.
\end{description}
\end{assumption}
\begin{remark}
    Sufficient conditions for \ref{cond:continuity-coercivity}, \ref{cond:hilbert-schmidt} and \ref{cond:A1A2} to hold are described in detail in Remark 2.3 of \cite{2025-auestad}. The condition \ref{cond:continuity-coercivity} on $a_1$ is the same as Gårding's inequality, and holds under positivity and boundedness conditions on the coefficients of $a_1$. 
\end{remark}
Under Assumption \ref{assumption:model}, the operators $-A_j : D(A_j) \to H$ generate analytic semigroups on $H$, which we denote by $S_j(\cdot)$, $j = 1, 2$. Fractional powers of $\lambda + A_1$ are defined as 
\begin{align}\label{eq:fractional-powers}
\begin{split}
    (\lambda + A_1)^{-\alpha} &:= \frac{1}{\Gamma(\alpha)} \int_0^{\infty} t^{-1 + \alpha} e^{-\lambda t} S_1(t) \, dt, \\
    (\lambda + A_1)^{\alpha} &:= \frac{\sin(\alpha \pi)}{\pi} \int_0^{\infty} t^{-1 + \alpha} (t + \lambda + A_1)^{-1} (\lambda + A_1) \, dt, 
\end{split}
\end{align}
for $\alpha \in (0,1)$, (the expression for $(\lambda + A_1)^{-\alpha}$ also holds for any $\alpha \geq 1$), and similarly for $A_2$ (replacing $\lambda + A_1$ by $A_2$ and $e^{-\lambda \, \cdot} S_1(\cdot)$ by $S_2(\cdot)$ in \eqref{eq:fractional-powers}). Lemma \ref{lemma:analytic-semigroup} in Appendix \ref{app:B} summarizes some useful properties of analytic semigroups and their generators. 

Under Assumption \ref{assumption:model}, \eqref{eq:example-model} has a mild solution, $u$, defined by the stochastic convolution,
\begin{align}\label{eq:mild-form}
    u(t) := S_1(t) \xi + \int_0^t S_1(t-s) b(s) A_2^{-\gamma} \, dW, \quad t \in [0,T].
\end{align}
To show that $u$ is well defined, and to describe the space and time regularity of $u$, the following lemma will be helpful.
\begin{lemma}\label{lemma:helpful-estiamtes}
    Under Assumption \ref{assumption:model}, the following estimates hold:
    \begin{enumerate}[(a)]
        \item for any $\alpha \geq 0$, $\beta \in [-1/2, \infty)$, and $\epsilon > 0$ small, there is $C_{\epsilon} > 0$ such that
        \begin{align*}
            \Vert (\lambda + A_1)^{\alpha} S_1(t) A_2^{-\beta} \Vert_{L(H)} \leq C_{\epsilon} e^{\lambda t} t^{\min(0,-\alpha - \epsilon + \min(1,\beta))}, 
        \end{align*}
        \item while for any $0 \leq t_1 \leq t_2 \leq T$, and any $\alpha \in [0, \min(1/2, 1 / 2 + \gamma - d / 4))$, there is $C_{\alpha} > 0$ such that
        \begin{align*}
            \int_{t_1}^{t_2} \Vert S_1(t_2 - s) A_2^{-\gamma} \Vert_{L_2(H)}^2 \, ds \leq C_{\alpha} e^{2 \lambda t_2} (t_2 - t_1)^{2 \alpha}, 
        \end{align*}
        while for $\alpha \in [0, \min(1, 1/2 + \gamma - d / 4))$, there is $C_{\alpha} > 0$ such that
        \begin{align*}
            \int_0^{t_1} \Vert (S_1(t_2 - s) - S_1(t_1 - s)) A_2^{-\gamma} \Vert_{L_2(H)}^2 \, ds \leq C_{\alpha} e^{2 \lambda t_2} (t_2 - t_1)^{2 \alpha}.
        \end{align*} 
    \end{enumerate}
\end{lemma}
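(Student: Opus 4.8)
The plan is to establish (a) first, and then obtain both estimates in (b) from it by peeling off a Hilbert--Schmidt factor and integrating. The two building blocks for (a) are the standard smoothing bounds for the analytic semigroup generated by $-(\lambda + A_1)$, namely $\Vert (\lambda + A_1)^{\nu} S_1(t) \Vert_{L(H)} \leq C e^{\lambda t} t^{-\nu}$ for $\nu \geq 0$ and the corresponding boundedness (up to the factor $e^{\lambda t}$) for $\nu \leq 0$, which I would take from Lemma \ref{lemma:analytic-semigroup}, together with fractional-power comparison bounds between $\lambda + A_1$ and $A_2$. The guiding idea is to convert the factor $A_2^{-\beta}$ into a power of $\lambda + A_1$ so that the semigroup smoothing applies directly.

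For $\beta \in [0,1]$ I would write $A_2^{-\beta} = (\lambda + A_1)^{-\beta}\,[(\lambda + A_1)^{\beta} A_2^{-\beta}]$ and use $(\lambda + A_1)^{\beta} A_2^{-\beta} \in L(H)$, which is \ref{cond:A1A2} at $\beta = 1$, trivial at $\beta = 0$, and obtained at intermediate $\beta$ by interpolation; then $(\lambda + A_1)^{\alpha} S_1(t) A_2^{-\beta} = (\lambda+A_1)^{\alpha - \beta} S_1(t)\,[(\lambda+A_1)^{\beta} A_2^{-\beta}]$ gives the exponent $\min(0,\beta-\alpha)$. For $\beta \geq 1$ only one power of smoothing is available from \ref{cond:A1A2}, so I would split off the bounded factor $A_2^{-(\beta-1)}$ (using coercivity of $a_2$) and treat the remaining $A_2^{-1}$ as above, yielding $\min(0,1-\alpha)$. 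For $\beta \in [-1/2,0)$ the factor $A_2^{-\beta}=A_2^{|\beta|}$ is unbounded, so I would instead write $A_2^{|\beta|} = (\lambda+A_1)^{|\beta|}\,[(\lambda+A_1)^{-|\beta|}A_2^{|\beta|}]$ and use boundedness of $(\lambda+A_1)^{-|\beta|}A_2^{|\beta|}$, which is \ref{cond:A1A2} at $|\beta|=1/2$ interpolated down to $[0,1/2]$, giving $\beta-\alpha$. In all cases the exponent is $\min(0,-\alpha+\min(1,\beta))$, and the $\epsilon$ records the loss from interpolating the comparison bounds to non-endpoint powers (so that one only controls, e.g., $(\lambda+A_1)^{\beta-\epsilon}A_2^{-\beta}$).

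For (b) I would fix $\rho > d/4$ close to $d/4$, so that $\Vert A_2^{-\rho}\Vert_{L_2(H)} \leq C$ by \ref{cond:hilbert-schmidt}, and factor $A_2^{-\gamma} = A_2^{-(\gamma-\rho)}A_2^{-\rho}$; by \ref{cond:gamma-size} we have $\gamma - \rho > -1/2$ for $\rho$ near $d/4$, so $\beta = \gamma-\rho$ lies in the admissible range of (a). For the first estimate, substituting $r = t_2 - s$ and applying (a) with $\alpha = 0$ gives $\Vert S_1(r)A_2^{-\gamma}\Vert_{L_2(H)} \leq C_\epsilon e^{\lambda r} r^{\min(0,\min(1,\gamma-\rho)-\epsilon)}$; integrating the square over $(0,t_2-t_1)$ produces $(t_2-t_1)^{2\min(0,\,\cdot\,)+1}$, and letting $\rho\downarrow d/4$, $\epsilon\downarrow 0$ reaches any $\alpha < \min(1/2,\,1/2+\gamma-d/4)$. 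For the second estimate I would use $S_1(t_2-s)-S_1(t_1-s) = (S_1(t_2-t_1)-I)S_1(t_1-s)$, insert $(\lambda+A_1)^{-\theta}(\lambda+A_1)^{\theta}$, and combine the increment bound $\Vert (S_1(\delta)-I)(\lambda+A_1)^{-\theta}\Vert_{L(H)} \leq C\delta^{\theta}$ (valid for $\theta\in[0,1]$, with exponential factors absorbed into $e^{\lambda t_2}$) with (a) applied to $(\lambda+A_1)^{\theta}S_1(t_1-s)A_2^{-(\gamma-\rho)}$; after integrating in $s$, convergence of the integral at $r=0$ caps $\theta$ at $1/2+\min(1,\gamma-d/4)$ while the increment bound caps it at $1$, jointly giving any $\alpha < \min(1,\,1/2+\gamma-d/4)$.

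The main obstacle is the fractional-power comparison step underlying (a): since the forms $a_j$ carry first-order drift terms, $A_1$ and $A_2$ are not self-adjoint, so interpolating the two endpoint relations in \ref{cond:A1A2} to the full ranges $\beta\in[0,1]$ and $|\beta|\in[0,1/2]$ is not automatic, and it is precisely this interpolation that forces the $\epsilon$ in the exponent. Once boundedness of $(\lambda+A_1)^{\beta}A_2^{-\beta}$ and $(\lambda+A_1)^{-|\beta|}A_2^{|\beta|}$ is secured on these ranges, the algebra in (a) and the integrations in (b) are routine.
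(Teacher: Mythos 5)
Your overall architecture is right, and part (b) as you describe it (peel off $A_2^{-\rho}$ with $\rho$ slightly above $d/4$ via \ref{cond:hilbert-schmidt}, apply (a), integrate; and for the increment, write $S_1(t_2-s)-S_1(t_1-s)=(S_1(t_2-t_1)-I)S_1(t_1-s)$ and insert $(\lambda+A_1)^{-\theta}(\lambda+A_1)^{\theta}$) is essentially the argument the paper delegates to Lemma 2.6 of \cite{2025-auestad}. The exponent bookkeeping in both parts of (b) checks out.

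The genuine gap is in (a), and it is exactly the step you flag yourself and then leave unresolved: the boundedness of $(\lambda+A_1)^{\beta-\epsilon}A_2^{-\beta}$ (and of $(\lambda+A_1)^{-|\beta|}A_2^{|\beta|}$) for intermediate powers. ``Interpolating the two endpoint relations in \ref{cond:A1A2}'' is a Heinz--Kato type statement comparing fractional powers of two \emph{different} non-self-adjoint sectorial operators, and that is not available here without extra structure (bounded imaginary powers or the like); since this bound \emph{is} the content of the lemma for $\beta\in(0,1]$, declaring it ``secured'' does not constitute a proof. The paper fills this hole with a concrete mechanism: it writes $A_2^{-\beta}$ via the Balakrishnan representation \eqref{eq:fractional-powers} as $\Gamma(\beta)^{-1}\int_0^{\infty}t^{-1+\beta}S_2(t)\,dt$, passes the closed operator $(\lambda+A_1)^{\beta-\epsilon}$ under the integral, and uses $\Vert(\lambda+A_1)^{\alpha}S_2(t)\Vert_{L(H)}\leq Ce^{-\delta t}t^{-\alpha}$ for $\alpha\in[0,1]$. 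The latter is obtained from the endpoints $\Vert S_2(t)\Vert_{L(H)}\leq Ce^{-\delta t}$ and $\Vert(\lambda+A_1)S_2(t)\Vert_{L(H)}\leq\Vert(\lambda+A_1)A_2^{-1}\Vert_{L(H)}\Vert A_2S_2(t)\Vert_{L(H)}\leq Ce^{-\delta t}t^{-1}$ via the moment inequality for fractional powers of the \emph{single} sectorial operator $\lambda+A_1$ applied to the element $S_2(t)x$ -- a legitimate interpolation that needs no self-adjointness. Integrating $t^{-1+\beta}\cdot t^{-\beta+\epsilon}e^{-\delta t}$ then yields $\Vert(\lambda+A_1)^{\beta-\epsilon}A_2^{-\beta}\Vert_{L(H)}\leq C_{\epsilon}$, which is where the $\epsilon$-loss actually comes from. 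Similarly, for $\beta\in[-1/2,0]$ the paper does not interpolate the standalone comparison operator but rather the family $(\lambda+A_1)^{\alpha}S_1(t)A_2^{\theta}$ between the $\theta=0$ and $\theta=1/2$ endpoint bounds (the latter from \ref{cond:A1A2}), citing a moment-type inequality from \cite{2024-auestad}. You should replace your comparison-by-interpolation step with this representation-formula argument; the rest of your proof then goes through.
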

\begin{proof}
    The estimate $(a)$ follows by Lemma \ref{lemma:analytic-semigroup} and \ref{cond:A1A2}, noting first that
    \begin{align*}
        \Vert (\lambda + A_1)^{\alpha} S_1(t) \Vert_{L(H)} &\leq C e^{(\lambda - \delta) t} t^{-\alpha}, \\
        \Vert (\lambda + A_1)^{\alpha} S_1(t) A_2^{1/2} \Vert_{L(H)} &\leq \Vert (\lambda + A_1)^{\alpha + 1/2} S_1(t) \Vert_{L(H)} \Vert (\lambda + A_1)^{-1/2} A_2^{1/2} \Vert_{L(H)} \\ 
        &\leq C e^{(\lambda - \delta) t} t^{-\alpha-1/2},
    \end{align*}
    which by interpolation (see, e.g., Lemma 2.5 in \cite{2024-auestad}) gives the estimate for $\beta \in [-1/2,0]$. For $\beta \in (0,1]$, note first that from the definition \eqref{eq:fractional-powers} one sees that for any $\epsilon \in (0,\beta)$
    \begin{align*}
        \Vert (\lambda + A_1)^{\beta - \epsilon} A_2^{-\beta} \Vert_{L(H)} &\leq C \Vert \int_0^{\infty} t^{-1 + \beta} (\lambda + A_1)^{\beta - \epsilon} S_2(t) \, dt \Vert_{L(H)} \\
        &\leq C \int_0^{\infty} t^{-1 + \beta} \Vert (\lambda + A_1)^{\beta - \epsilon} S_2(t) \Vert_{L(H)} \, dt \\
        &\leq C \int_0^{\infty} t^{-1 + \epsilon} e^{-\delta t} \, dt \\
        &\leq C_{\epsilon},
    \end{align*}
    where we used that fractional powers of $\lambda + A_1$ are closed to pass it under the integral, and the inequality $\Vert (\lambda + A_1)^{\alpha} S_2(t) \Vert_{L(H)} \leq C e^{-\delta t} t^{-\alpha}$, $\alpha \in [0,1]$, which follows by \ref{cond:A1A2} and interpolation. Hence, using Lemma \ref{lemma:analytic-semigroup} and the observations above
    \begin{align*}
        \Vert A_1^{\alpha} S_1(t) A_2^{-\beta} \Vert_{L(H)} = \Vert A_1^{\alpha - \beta + \epsilon} S_1(t) A_1^{\beta - \epsilon} A_2^{-\beta} \Vert_{L(H)} &\leq \Vert A_1^{\alpha - \beta + \epsilon} S_1(t) \Vert_{L(H)} \Vert A_1^{\beta - \epsilon} A_2^{-\beta} \Vert_{L(H)} \\
        &\leq C_{\epsilon} e^{(\lambda - \delta) t} t^{\min(0,-\alpha + \beta - \epsilon)},
    \end{align*}
    where we used that negative fractional powers of $\lambda + A_1$ are bounded. The cases $\beta > 1$ follows by the boundedness of negative fractional powers of $A_2$.
    
    The estimates in $(b)$ follows similarly by Lemma \ref{lemma:analytic-semigroup} and \ref{cond:A1A2}. See, e.g., the proof of Lemma 2.6 in \cite{2025-auestad} for details. 
\end{proof}

The next lemma asserts that \eqref{eq:example-model} has a mild solution, and describes its spatial regularity.   
\begin{lemma}
    Under Assumption \ref{assumption:model}, \eqref{eq:example-model} has a mild solution $u$ with $u(t) \in D((\lambda + A_1)^{\alpha})$ $P$-a.s. for any $t > 0$ and $\alpha < \min(3/2, 1/2 + \gamma - d / 4)$. 
\end{lemma}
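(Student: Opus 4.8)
The plan is to verify two things about the candidate mild solution \eqref{eq:mild-form}: that the stochastic convolution is well defined, and that $u(t)$ has the claimed spatial regularity. Writing $\Phi_t(s) := \chi_{[0,t]}(s) S_1(t-s) b(s) A_2^{-\gamma}$, the first amounts to checking $\Phi_t \in \mathcal{N}_W(0,T)$ $P$-a.s., after which Theorem \ref{theorem:extension-inequality} defines the integral. The second I would obtain by showing that the transformed integrand $(\lambda + A_1)^\alpha \Phi_t$ is again stochastically integrable, and then passing the closed operator $(\lambda + A_1)^\alpha$ through the Itô integral.

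For well-definedness, $\Phi_t$ is $\mathcal{P}_T$-measurable since $b$ is by \ref{cond:b} and $s \mapsto S_1(t-s) A_2^{-\gamma}$ is deterministic and Borel on $[0,t)$. Using $\vert b(s) \vert \leq \Vert b \Vert_\infty < \infty$ $P$-a.s. from \ref{cond:b},
\[
\int_0^t \Vert \Phi_t(s) \Vert_{L_2(H)}^2 \, ds \leq \Vert b \Vert_\infty^2 \int_0^t \Vert S_1(t-s) A_2^{-\gamma} \Vert_{L_2(H)}^2 \, ds,
\]
and the deterministic integral is finite by Lemma \ref{lemma:helpful-estiamtes}(b) with $t_1 = 0$, $t_2 = t$, whose admissible exponent range is nonempty precisely because $\gamma > d/4 - 1/2$ by \ref{cond:gamma-size}. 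Hence $\Phi_t \in \mathcal{N}_W(0,T)$ $P$-a.s. and $u(t)$ is well defined. The term $S_1(t)\xi$ already lies in $D((\lambda+A_1)^\alpha)$ for every $\alpha$ and every $t > 0$ by analyticity of $S_1$, quantitatively via Lemma \ref{lemma:helpful-estiamtes}(a) with $\beta = 0$, so the regularity question reduces entirely to the convolution.

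The heart of the proof is the bound $\int_0^t \Vert (\lambda + A_1)^\alpha S_1(t-s) A_2^{-\gamma} \Vert_{L_2(H)}^2 \, ds < \infty$ for $\alpha < \min(3/2, 1/2 + \gamma - d/4)$. I would split the power as $\alpha = \alpha_1 + \alpha_2$ and, using that powers of $\lambda + A_1$ commute with $S_1$, factor (for small $\epsilon > 0$)
\[
(\lambda + A_1)^\alpha S_1(t-s) A_2^{-\gamma} = (\lambda + A_1)^{\alpha_1} S_1(t-s) \cdot (\lambda + A_1)^{\alpha_2} A_2^{-(\alpha_2 + \epsilon)} \cdot A_2^{\alpha_2 + \epsilon - \gamma}.
\]
The first factor contributes the time singularity $\Vert (\lambda + A_1)^{\alpha_1} S_1(t-s) \Vert_{L(H)} \leq C_\epsilon e^{\lambda(t-s)}(t-s)^{-\alpha_1 - \epsilon}$ from Lemma \ref{lemma:helpful-estiamtes}(a), integrable in $s$ provided $\alpha_1 < 1/2$; the second is bounded by the comparability estimate $\Vert (\lambda + A_1)^{\alpha_2} A_2^{-(\alpha_2 + \epsilon)} \Vert_{L(H)} \leq C_\epsilon$, valid for $\alpha_2 + \epsilon \in (0,1]$, which is exactly the auxiliary inequality established inside the proof of Lemma \ref{lemma:helpful-estiamtes}(a) and rests on \ref{cond:A1A2}; and the third is Hilbert--Schmidt by \ref{cond:hilbert-schmidt} provided $\alpha_2 + \epsilon - \gamma < -d/4$. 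Collecting $\alpha_1 < 1/2$, $\alpha_2 \leq 1$ and $\alpha_2 < \gamma - d/4$ realizes any $\alpha = \alpha_1 + \alpha_2 < 1/2 + \min(1, \gamma - d/4) = \min(3/2, 1/2 + \gamma - d/4)$ whenever $\gamma > d/4$. In the complementary regime $\gamma \leq d/4$, where $A_2^{-\gamma}$ is not itself Hilbert--Schmidt and this conversion leaves no room, the target is only $\alpha < 1/2 + \gamma - d/4 \leq 1/2$, and I would instead keep the semigroup adjacent to $A_2^{-\gamma}$, writing $(\lambda + A_1)^\alpha S_1(t-s) A_2^{-\gamma} = [(\lambda + A_1)^\alpha S_1(\tfrac{t-s}{2})][S_1(\tfrac{t-s}{2}) A_2^{-\gamma}]$ and feeding the integrated Hilbert--Schmidt estimate of Lemma \ref{lemma:helpful-estiamtes}(b) against the time singularity $(t-s)^{-\alpha - \epsilon}$ (e.g. by integration by parts in the time variable), which gives $\alpha < \min(1/2, 1/2 + \gamma - d/4)$ and hence the claim there. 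Multiplying by $\Vert b \Vert_\infty^2 < \infty$ then shows $(\lambda + A_1)^\alpha \Phi_t \in \mathcal{N}_W(0,T)$ $P$-a.s.

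Finally, I would conclude by commuting the closed operator $(\lambda + A_1)^\alpha$ with the Itô integral: approximating $\Phi_t$ by elementary integrands $\Phi^{(n)}$ with $\Phi^{(n)} \to \Phi_t$ and $(\lambda+A_1)^\alpha \Phi^{(n)} \to (\lambda+A_1)^\alpha \Phi_t$ in $\mathcal{N}_W(0,T)$, Theorem \ref{theorem:extension-inequality} yields convergence of both integrals in $L^0(\Omega; C([0,T];H))$, and closedness of $(\lambda+A_1)^\alpha$ then gives $\int_0^t \Phi_t \, dW \in D((\lambda+A_1)^\alpha)$ with $(\lambda+A_1)^\alpha \int_0^t \Phi_t\,dW = \int_0^t (\lambda+A_1)^\alpha \Phi_t\,dW$ $P$-a.s.; together with the regularity of $S_1(t)\xi$ this gives $u(t) \in D((\lambda+A_1)^\alpha)$ $P$-a.s. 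I expect the main obstacle to be the sharp bookkeeping in the factorization: the ceiling $\alpha < 3/2$ is forced by the comparability of the $A_1$- and $A_2$-scales holding only up to power one (a feature of \ref{cond:A1A2}), while the competing ceiling $\alpha < 1/2 + \gamma - d/4$ comes from trading the time singularity against the Hilbert--Schmidt margin; handling the two regimes in $\gamma$ uniformly, and rigorously justifying the commutation in the $L^0$ setting, are the delicate points.
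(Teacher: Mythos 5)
Your proposal is correct and follows essentially the same route as the paper: reduce to showing $(\lambda+A_1)^{\alpha} S_1(t-\cdot)\, b\, A_2^{-\gamma} \in \mathcal{N}_W(0,t)$, pull out $\Vert b \Vert_{\infty}$, and split the fractional powers between the $A_1$- and $A_2$-scales using \ref{cond:hilbert-schmidt} and \ref{cond:A1A2}; the paper simply invokes Lemma \ref{lemma:helpful-estiamtes}(a) with $\beta = \gamma - d/4 - \epsilon$, whose admissible range $\beta \in [-1/2,\infty)$ covers both of your regimes at once (the case $\gamma \le d/4$ being handled there by interpolation rather than your integration-by-parts detour through part (b)). Your explicit closedness argument for commuting $(\lambda+A_1)^{\alpha}$ with the It\^o integral is a valid way of making precise a step the paper leaves implicit.
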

\begin{proof}
    It suffices to show that $(\lambda + A_1)^{\alpha} S_1(t - \cdot) b A_2^{-\gamma} \in \mathcal{N}_W(0,t)$ for any $t \in [0,T]$. Thus, we need $(\lambda + A_1)^{\alpha} S_1(t - \cdot) b A_2^{-\gamma} \in L^2(0,t ; L_2(H))$ $P$-a.s., and a direct computation yields
    \begin{align*}
        \Vert (\lambda + A_1)^{\alpha} S_1(t-\cdot) b A_2^{-\gamma} \Vert_{L_2(H)} \leq \Vert b \Vert_{\infty} \Vert (\lambda + A_1)^{\alpha} S_1(t-\cdot) A_2^{-\gamma} \Vert_{L_2(H)},
    \end{align*} 
    (using that scaling by $b(s)$ viewed as an operator $H \to H$ has operator norm bounded by $\Vert b \Vert_{\infty}$). The first factor is finite $P$-a.s. by \ref{cond:b}, while the second is in $L^2(0,t ; \mathbb{R})$ by Lemma \ref{lemma:helpful-estiamtes} and \ref{cond:hilbert-schmidt}: for any $\epsilon \in (0, \gamma - d / 4 + 1/2]$ small
    \begin{align*}
        \Vert (\lambda + A_1)^{\alpha} S_1(t) A_2^{-\gamma} \Vert_{L_2(H)} &\leq \Vert (\lambda + A_1)^{\alpha} S_1(t) A_2^{-\gamma + d / 4 + \epsilon} \Vert_{L(H)} \Vert A_2^{-d / 4 - \epsilon} \Vert_{L_2(H)} \\
        &\leq C_{\epsilon} e^{\lambda t} t^{\min(0,-\alpha -\epsilon + \min(1, \gamma - d / 4 - \epsilon))},
    \end{align*}
    and so the square of $t^{\min(0,-\alpha - \epsilon + \min(1, \gamma - d / 4 - \epsilon))}$ is integrable if we choose $\alpha < \min(3/2, 1 / 2 + \gamma - d / 4 - \epsilon) - \epsilon$, where we may choose $\epsilon$ arbitrarily small. 
\end{proof}

\subsection{Hölder regularity of the trajectories of (\ref{eq:example-model})}

By combining the following lemma with Theorem \ref{theorem:extension-inequality-2} and Lemma \ref{lemma:continuity-test} we can derive Hölder regularity of the trajectories of the mild solution $u$. 
\begin{lemma}\label{lemma:hölder-estimate}
    Suppose Assumption \ref{assumption:model} holds, and let $u$ be the mild solution to \eqref{eq:example-model} with $\xi = 0$. Then, for $0 \leq t_1 \leq t_2 \leq T$, 
    \begin{align*}
        u(t_2) - u(t_1) = \int_0^T \Phi \, dW, \quad \Phi := \chi_{[0,t_2]} S_1(t_2 - \cdot) b A_2^{-\gamma} - \chi_{[0,t_1]} S_1(t_1 - \cdot) b A_2^{-\gamma}
    \end{align*}
    where for any $\alpha \in [0, \min(1/2, 1/2 + \gamma - d / 4))$, there is $C_{\alpha} > 0$, such that
    \begin{align*}
        \int_0^T \Vert \Phi \Vert_{L_2(H)}^2 \, ds \leq C_{\alpha} \Vert b \Vert_{\infty}^2 e^{2\lambda t_2} (t_2 - t_1)^{2\alpha}.
    \end{align*}
\end{lemma}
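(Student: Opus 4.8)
The plan is to split the argument into the integral \emph{identity} and the subsequent \emph{estimate}, the latter resting entirely on the deterministic semigroup bounds of Lemma \ref{lemma:helpful-estiamtes}(b).

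For the identity, I would write both endpoints as stochastic integrals over the full interval $[0,T]$ by inserting indicator functions,
\[
    u(t_i) = \int_0^T \chi_{[0,t_i]} S_1(t_i - \cdot)\, b\, A_2^{-\gamma} \, dW, \qquad i = 1, 2,
\]
each well defined since, as shown in the proof of the preceding lemma, the integrand $S_1(t_i-\cdot)\, b\, A_2^{-\gamma}$ lies in $\mathcal{N}_W(0,t_i)$ and hence, after extension by zero, in $\mathcal{N}_W(0,T)$. Subtracting and invoking the linearity of the extended Itô integral from Theorem \ref{theorem:extension-inequality} then gives $u(t_2) - u(t_1) = \int_0^T \Phi \, dW$ with $\Phi$ exactly as stated.

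For the estimate, the key observation is that $\Phi$ vanishes on $(t_2, T]$, so I would split $\int_0^T \Vert \Phi \Vert_{L_2(H)}^2 \, ds$ into the two pieces over $[0,t_1]$ and $(t_1, t_2]$. On $(t_1,t_2]$ only the first indicator survives, so there $\Phi = S_1(t_2 - \cdot)\, b\, A_2^{-\gamma}$, whereas on $[0,t_1]$ both indicators equal one and the common $b\, A_2^{-\gamma}$ factors combine to give $\Phi = (S_1(t_2-\cdot) - S_1(t_1-\cdot))\, b\, A_2^{-\gamma}$. In each piece I would factor out the scalar $b(s)$, bounding its operator norm on $H$ by $\Vert b \Vert_{\infty}$ via \ref{cond:b}, and then apply the two estimates of Lemma \ref{lemma:helpful-estiamtes}(b): the first estimate to the piece over $(t_1,t_2]$ and the second to the piece over $[0,t_1]$. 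Adding the two contributions, each bounded by $C_\alpha \Vert b \Vert_\infty^2 e^{2\lambda t_2}(t_2 - t_1)^{2\alpha}$, yields the claimed bound.

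Since Lemma \ref{lemma:helpful-estiamtes} already supplies both deterministic semigroup estimates, no genuine analytic obstacle remains; the only points requiring care are the correct bookkeeping of the indicators when splitting the integral at $t_1$, and confirming that the range $\alpha \in [0, \min(1/2, 1/2 + \gamma - d/4))$ asserted here is admissible for \emph{both} estimates of part (b) — which it is, being contained in the wider range $[0, \min(1, 1/2 + \gamma - d/4))$ of the second estimate.
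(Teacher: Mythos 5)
Your proposal is correct and follows essentially the same route as the paper: the same decomposition of $\Phi$ into the piece $S_1(t_2-\cdot)\,b\,A_2^{-\gamma}$ on $(t_1,t_2]$ and $(S_1(t_2-\cdot)-S_1(t_1-\cdot))\,b\,A_2^{-\gamma}$ on $[0,t_1]$, with $b$ factored out via \ref{cond:b} and the two estimates of Lemma \ref{lemma:helpful-estiamtes}(b) applied to the respective pieces. The only cosmetic difference is that the paper treats the two terms as a sum and invokes Young's inequality, whereas you exploit their disjoint supports to split the integral directly; both yield the stated bound.
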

\begin{proof}
    Note that
    \begin{align*}
        \Phi &= \chi_{[t_1, t_2]} S_1(t_2 - \cdot) b A_2^{-\gamma} + (S_1(t_2 - \cdot) - S_1(t_1 - \cdot)) \chi_{[0,t_1]} b A_2^{-\gamma} \\
        &=: (i) + (ii),
    \end{align*}
    and that by Young's inequality
    \begin{align*}
        \int_0^T \Vert \Phi \Vert_{L_2(H)}^2 \, ds \leq 2 \int_0^T (\Vert (i) \Vert_{L_2(H)}^2 + \Vert (ii) \Vert_{L_2(H)}^2) \, ds.
    \end{align*}
    By Lemma \ref{lemma:helpful-estiamtes} we have
    \begin{align*}
        \int_0^T \Vert (i) \Vert_{L_2(H)}^2 \, ds &= \int_{t_1}^{t_2} \Vert S_1(t_2 - s) b(s) A_2^{-\gamma} \Vert_{L_2(H)}^2 \, ds \\
        &\leq \Vert b \Vert_{\infty}^2 \int_{t_1}^{t_2} \Vert S_1(t_2 - s) A_2^{-\gamma} \Vert_{L_2(H)}^2 \, ds \\
        &\leq C_{\alpha} \Vert b \Vert_{\infty}^2 e^{2\lambda t_2} (t_2 - t_1)^{2\alpha}.
    \end{align*}
    For $(ii)$, also using Lemma \ref{lemma:helpful-estiamtes},
    \begin{align*}
        \int_0^T \Vert (ii) \Vert_{L_2(H)}^2 \, ds &= \int_0^{t_1} \Vert (S_1(t_2 - s) - S_1(t_1 - s)) b(s) A_2^{-\gamma} \Vert_{L_2(H)}^2 \, ds \\
        &\leq \Vert b \Vert_{\infty}^2 \int_0^{t_1} \Vert (S_1(t_2 - s) - S_1(t_1 - s)) A_2^{-\gamma} \Vert_{L_2(H)}^2 \, ds \\
        &\leq C_{\alpha} \Vert b \Vert_{\infty}^2 e^{2 \lambda t_2} (t_2 - t_1)^{2\alpha}.
    \end{align*}
\end{proof}

Now we are ready to derive Hölder regularity of the trajectories of \eqref{eq:example-model}.
\begin{theorem}\label{theorem:hölder-regularity}
    Suppose Assumption \ref{assumption:model} holds. Then for any $0 < t_0 < T$ and $\beta \in [0, \min(1/2, 1/2 + \gamma - d / 4))$, the trajectories of $u$ are in $C^{\beta}([t_0, T] ; H)$ $P$-a.s. If in addition $\xi \in D((\lambda + A_1)^{\beta})$ $P$-a.s. we may choose $t_0 = 0$.
\end{theorem}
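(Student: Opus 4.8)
The plan is to decompose the mild solution \eqref{eq:mild-form} as $u(t) = S_1(t)\xi + v(t)$, where $v(t) := \int_0^t S_1(t-s) b(s) A_2^{-\gamma}\,dW$ is the stochastic convolution, i.e.\ the mild solution of \eqref{eq:example-model} with $\xi = 0$ appearing in Lemma \ref{lemma:hölder-estimate}. I would establish $\beta$-Hölder continuity of each summand separately; since a sum of two $\beta$-Hölder functions is $\beta$-Hölder, this yields the theorem. The stochastic convolution is handled on all of $[0,T]$ by combining Lemma \ref{lemma:hölder-estimate} with Theorem \ref{theorem:extension-inequality-2} and Lemma \ref{lemma:continuity-test}, while the deterministic part is handled by the analytic semigroup estimates of Lemma \ref{lemma:analytic-semigroup}. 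Throughout I fix $\beta \in (0, \min(1/2, 1/2 + \gamma - d/4))$, the case $\beta = 0$ reducing to continuity, which holds for $v$ by Theorem \ref{theorem:extension-inequality} and for $S_1(\cdot)\xi$ by strong continuity of the semigroup.

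For $v$, choose $\alpha$ with $\beta < \alpha < \min(1/2, 1/2 + \gamma - d/4)$, write the dyadic points $t_j^m := jT2^{-m}$, and let $\Phi_{m,j}$ be the integrand from Lemma \ref{lemma:hölder-estimate} so that $v(t_{j+1}^m) - v(t_j^m) = \int_0^T \Phi_{m,j}\,dW$. Setting $\Psi_{m,j} := (T2^{-m})^{-\beta}\Phi_{m,j}$ and enumerating the countable family $\{\Psi_{m,j} : m \ge n,\ 0 \le j < 2^m\}$ as a single sequence, I would apply Theorem \ref{theorem:extension-inequality-2} with an exponent $\delta \ge 2$ to be fixed, estimating each dyadic increment by the running supremum $\Vert \int_0^T \Psi_{m,j}\,dW\Vert_H \le \sup_{t}\Vert \int_0^t \Psi_{m,j}\,dW\Vert_H$. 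Lemma \ref{lemma:hölder-estimate} bounds the quadratic-variation integrals by $\int_0^T \Vert \Psi_{m,j}\Vert_{L_2(H)}^2\,ds \le C_\alpha \Vert b\Vert_\infty^2 e^{2\lambda T}(T2^{-m})^{2(\alpha-\beta)}$, so that raising to the power $\delta/2$ and summing the $2^m$ intervals at level $m$ produces a factor $2^{m(1-\delta(\alpha-\beta))}$.

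The decisive step is the parameter balance: taking $\delta \ge 2$ large enough that $\delta(\alpha-\beta) > 1$ turns $\sum_{m \ge n} 2^{m(1-\delta(\alpha-\beta))}$ into a geometric tail vanishing as $n \to \infty$. This gives
\begin{align*}
E\Big[1 \wedge \sum_{m\geq n}\sum_{j=0}^{2^m-1} \frac{\Vert v(t_{j+1}^m) - v(t_j^m)\Vert_H^{\delta}}{(T2^{-m})^{\beta\delta}}\Big] \leq E\Big[1 \wedge C\Vert b\Vert_\infty^{\delta} \sum_{m\geq n} 2^{m(1-\delta(\alpha-\beta))}\Big],
\end{align*}
and since $\Vert b\Vert_\infty < \infty$ $P$-a.s.\ by \ref{cond:b}, the right-hand integrand tends to $0$ $P$-a.s.\ and is dominated by $1$, so dominated convergence sends the bound to $0$. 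Lemma \ref{lemma:continuity-test}, applied with $X = H$ and the metric induced by $\Vert\cdot\Vert_H$, then shows that $v$ has $\beta$-Hölder continuous trajectories on $[0,T]$ $P$-a.s.

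For the deterministic part I would use the analytic semigroup bounds $\Vert (S_1(h)-I)(\lambda+A_1)^{-\beta}\Vert_{L(H)} \le C h^{\beta}$ and $\Vert (\lambda+A_1)^{\beta} S_1(t)\Vert_{L(H)} \le C e^{\lambda t} t^{-\beta}$ from Lemma \ref{lemma:analytic-semigroup}. For $t_0 \le t_1 \le t_2 \le T$, writing $S_1(t_2)\xi - S_1(t_1)\xi = (S_1(t_2-t_1)-I)(\lambda+A_1)^{-\beta}(\lambda+A_1)^{\beta}S_1(t_1)\xi$ yields the increment bound $C e^{\lambda T} t_0^{-\beta}(t_2-t_1)^{\beta}\Vert\xi\Vert_H$, finite $P$-a.s.\ as $\xi$ is $H$-valued, hence $S_1(\cdot)\xi \in C^{\beta}([t_0,T];H)$ and $u \in C^{\beta}([t_0,T];H)$. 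If instead $\xi \in D((\lambda+A_1)^{\beta})$ $P$-a.s., I would factor $(S_1(t_2-t_1)-I)\xi = (S_1(t_2-t_1)-I)(\lambda+A_1)^{-\beta}(\lambda+A_1)^{\beta}\xi$ and use boundedness of $S_1(t_1)$, obtaining the bound $C e^{\lambda T}(t_2-t_1)^{\beta}\Vert (\lambda+A_1)^{\beta}\xi\Vert_H$ uniformly on all of $[0,T]$, so that $t_0 = 0$ is admissible. The main obstacle is the simultaneous choice of $\alpha \in (\beta, \min(1/2,1/2+\gamma-d/4))$ and $\delta$; once $\delta(\alpha-\beta) > 1$ is secured, the remaining estimates are routine.
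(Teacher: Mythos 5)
Your proposal is correct and follows essentially the same route as the paper: split off $S_1(\cdot)\xi$ (Lipschitz/Hölder via Lemma \ref{lemma:analytic-semigroup}), then treat the stochastic convolution by feeding the increment bound of Lemma \ref{lemma:hölder-estimate} into Theorem \ref{theorem:extension-inequality-2} over the dyadic family and closing with Lemma \ref{lemma:continuity-test}, choosing the exponent large enough that $\delta(\alpha-\beta)>1$ gives a summable geometric tail. Your explicit rescaling $\Psi_{m,j}=(T2^{-m})^{-\beta}\Phi_{m,j}$ before applying Theorem \ref{theorem:extension-inequality-2} just makes precise a step the paper leaves implicit.
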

\begin{proof}
    Note first that $S(\cdot) \xi$ is Lipchitz $P$-a.s. on the interval $[t_0, T]$, $0 < t_0 < T$, by Lemma \ref{lemma:analytic-semigroup}. If in addition $\xi \in D((\lambda + A_1)^{\beta})$ $P$-a.s., $S(\cdot) \xi$ is $\beta$-Hölder continuous on the interval $[0,T]$ $P$-a.s. Therefore, it suffices to study the Hölder regularity of $u$ with $\xi = 0$, i.e. the stochastic convolution
    \begin{align*}
        u = \int_0^{\cdot} S_1(\cdot - s) b(s) A_2^{-\gamma} \, dW.
    \end{align*}
    By Theorem \ref{theorem:extension-inequality-2} combined with Lemma \ref{lemma:hölder-estimate} we have for any $\alpha \in [0, \min(1/2, 1/2 + \gamma - d / 4))$
    \begin{align*}
        &E[1 \wedge \sum_{m \geq n} \sum_{j = 0}^{2^m - 1} \frac{\Vert u(j T 2^{-m}) - u((j+1) T 2^{-m}) \Vert_H^p}{(T 2^{-m})^{\beta p}}] \\
        &\qquad \leq E[1 \wedge \sum_{m \geq n} \sum_{j = 0}^{2^m - 1} \bigg( \frac{C_{\alpha} e^{2\lambda T} (T 2^{-m})^{2\alpha} \Vert b \Vert_{\infty}^2}{ (T 2^{-m})^{2\beta}} \bigg)^{p / 2}] \\
        &\qquad \leq E[1 \wedge C_{p,\alpha} e^{p \lambda T} \Vert b \Vert_{\infty}^p \sum_{m \geq n} 2^m (T 2^{-m})^{p(\alpha - \beta)} ] \to 0,
    \end{align*}
    as $n \to \infty$ by dominated convergence, provided $\beta < \alpha$, as we may choose $p \geq 2$ as large as desired. Hence, by Lemma \ref{lemma:continuity-test}, the stochastic convolution has trajectories in $C^{\beta}([0,T] ; H)$ $P$-a.s., for any $\beta \in [0, \min(1/2, 1/2 + \gamma - d/4))$, and so the sum \eqref{eq:mild-form} has trajectories in $C^{\beta}([t_0, T] ; H)$, $P$-a.s.
\end{proof}

\subsection{Pathwise convergence rates of a fully discrete finite element approximation of (\ref{eq:example-model})}

To approximate the mild solution $u$, we use a discretization similar to that proposed in \cite{2025-auestad}. To that end, let $V_h \subseteq V$, with $N_h := \mathrm{dim}(V_h)$, consist of piecewise first order polynomials defined on a regular collection of simplices with maximum diameter $h$. We make the following assumption on $V_h$ and the finite element approximation $A_{j,h}$ of $A_j$.
\begin{assumption}\label{assumption:fem}
\hfill
\begin{description}
    \myitem{(N1)}\label{cond:quasi-uniform} $V_h \subseteq V$ consists of continuous functions that are first order polynomials when restricted to $\tau \in \mathcal{T}_h$, where $\mathcal{T}_h$ is a collection of simplices with disjoint interior, satisfying $\bigcup_{\tau \in \mathcal{T}_h} \tau = \overline{\mathcal{D}}$, and for some $C > 0$,
    \begin{align*}
        C^{-1} h \leq 2 \rho(\tau) \leq \text{diam}(\tau) \leq 2 r(\tau) \leq C h, \quad \tau \in \mathcal{T}_h,
    \end{align*}
    where $\rho, r$ are the radii of the incircle and circumcircle, respectively. \medskip
    \myitem{(N2)}\label{cond:discrete-operator} The finite dimensional operators $A_{j,h} : V_h \to V_h$ are defined by
    \begin{align*}
        (A_{j,h} u, v)_H = a_j(u,v), \quad \text{for any } u, v \in V_h.
    \end{align*}
    \myitem{(N3)}\label{cond:solution-operator} With $\lambda \geq 0$ as in \ref{cond:continuity-coercivity} and $\pi_h$ the $H$-orthogonal projection onto $V_h$,
    \begin{align*}
        \Vert ((\lambda + A_1)^{-1} - (\lambda + A_{1,h})^{-1} \pi_h) (\lambda + A_1)^{\alpha} \Vert_{L(H)} \leq C h^{2-2\alpha}, 
    \end{align*}
    and 
    \begin{align*}
        \Vert A_2^{\alpha} (A_2^{-1} - A_{2,h}^{-1} \pi_h) \Vert_{L(H)} \leq C h^{2 - 2\alpha},
    \end{align*}
    for $\alpha \in \{0, 1/2\}$, and some $C > 0$. \medskip
    \myitem{(N4)}\label{cond:fem-bound} There is $C > 0$ such that $\Vert (\lambda + A_{1,h})^{-1/2} \pi_h (\lambda + A_1)^{1/2} \Vert_{L(H)} \leq C$.
\end{description}
\end{assumption}
\begin{remark}
    Sufficient conditions for \ref{cond:solution-operator} and \ref{cond:fem-bound} to hold are given in detail in Remark 2.8 in \cite{2025-auestad}. They may be verified under appropriate smoothness, positivity, boundedness and symmetry conditions on the coefficients of $a_j$.
\end{remark} 

Further, let $t_n := n \Delta t$ where $\Delta t := T / N$ for some integer $N > 0$, and $b_{\Delta t}^n$ be an approximation to $b(t_n)$, made precise in condition \ref{cond:b-process-approximation} below.
\begin{description}
    \myitem{(N5)}\label{cond:b-process-approximation} For some $\beta \in [0,1]$, $b$ has $\beta$-Hölder continuous trajectories and $\Vert b \Vert_{C^{\beta}}$ is $\mathcal{F}_T$-measurable. Further, $b_{\Delta t}^n$ is $\mathcal{F}_{t_n}$-measurable, and
    \begin{align*}
        \sup_{n = 0, \dots, N} \vert b(t_n) - b_{\Delta t}^n \vert \leq K \Delta t^{\beta}, \quad P\text{-a.s.},
    \end{align*}
    for some random variable $K > 0$ that does not depend on $N$.
\end{description}
\begin{remark}
    The condition \ref{cond:b-process-approximation} holds for example for any $\beta < 1 /2$ if $b$ is a (predictable) Brownian motion, (or an appropriate transformation thereof, preserving the predictability and Hölder regularity), and $b_{\Delta t}^n = b(t_n)$. Moreover, it holds for any $\beta < 1$ when $b = g \circ f$ with $f = (I - \partial_t^2)^{-1} \mathcal{W}$, where $\mathcal{W}$ is white noise on $L^2(0,1; \mathbb{R})$ independent of $W$, $g \in C^1([0,T] ; \mathbb{R})$, and $b_{\Delta t}^n$ are related to an appropriate finite element approximation of $b$---see Section \ref{section:numerical-experiments} for the details. 
\end{remark}

In order to approximate the fractional power operator $A_2^{-\gamma}$, we use the quadrature in \cite{2019-bonito}. To that end, let
\begin{align}\label{eq:quadrature}
    Q_k^{-\gamma}(A_2) := \frac{k \sin(\pi \gamma)}{\pi} \sum_{j = -M}^N e^{(1-\gamma) y_j } (e^{y_j} I + A_2)^{-1},
\end{align}
where, $k > 0$ is the quadrature resolution, $y_j = j k, \ j = -M, \dots, N$, and
\begin{align*}
    N = \bigg\lceil \frac{\pi^2}{2 \gamma k^2} \bigg\rceil, \quad M = \bigg\lceil \frac{\pi^2 }{2(1 - \gamma) k^2} \bigg\rceil.
\end{align*}
With this choice of $N$ and $M$, the quadrature converges exponentially in $k$, as described in Lemma \ref{lemma:A_h-quadrature}. 

Our fully discrete approximation of \eqref{eq:mild-form} is based on the semidiscrete approximation
\begin{align}\label{eq:semidiscrete-gamma-in-0-1}
    d u_h = -A_{1,h} u_h \, dt + b Q_k^{-\gamma}(A_{2,h}) \pi_h \, dW, \quad u_h(0) = \pi_h \xi,
\end{align}
with convention $Q_k^{-1}(A_{2,h}) = A_{2,h}^{-1}$ and $Q_k^0(A_{2,h}) = I$. Discretizing \eqref{eq:semidiscrete-gamma-in-0-1} in time with backward Euler, and using the approximation $b_{\Delta t}^n$ of $b(t_n)$, we get our fully discrete approximation,
\begin{align}\label{eq:fully-discrete-scheme}
    (I + \Delta t A_{1,h})u_{h,\Delta t}(t_{n+1}) = u_{h,\Delta t}(t_n) + b_{\Delta t}^n Q_k^{-\gamma}(A_{2,h}) \pi_h (W(t_{n+1}) - W(t_n)),
\end{align}
and we define $u_{h,\Delta t}(t), t \in (t_n, t_{n+1})$ by \eqref{eq:fully-discrete-2}. 

We may rewrite \eqref{eq:fully-discrete-scheme} as a system of equations for the coefficients of $u_{h,\Delta t}$ in the nodal basis of $V_h$, which we denote by $\varphi_j, \ j = 1, \dots, N_h$. To that end, let
\begin{align}\label{eq:fem-matrices}
    (M_h)_{ij} = \int_{\mathcal D} \varphi_j \varphi_i \, dx, \quad (T_{h})_{ij} = a_{1,h}(\varphi_j, \varphi_i), \quad (K_h)_{ij} = a_{2,h}(\varphi_j, \varphi_i).
\end{align}
Expressing $u_{h,\Delta t}(t_n) = \sum_{j = 1}^{N_h} \alpha_j^n \varphi_j$, for coefficients $\alpha_j^n$, (see Appendix B in \cite{2025-auestad} for a detailed derivation in the case of $b = 1$), \eqref{eq:fully-discrete-scheme} may be rewritten
\begin{align}\label{eq:scheme-basis-coefficients}
\begin{split}
    &(M_h + \Delta t T_h) \alpha^{n+1} \\
    &\qquad =
    \begin{cases}
        M_h \alpha^n + b_{\Delta t}^n M_h \Delta t^{1/2} \frac{k \sin(\pi \gamma)}{\pi} \sum_{j = -M}^N e^{(1-\gamma) y_j} (e^{y_j} M_h + K_h)^{-1} M_h^{1/2} \varrho_h^n, \quad &\gamma \in (0,1) \\
        M_h \alpha^n + b_{\Delta t}^n M_h \Delta t^{1/2} K_{h}^{-1} M_h^{1/2} \varrho_h^n, \quad &\gamma = 1,
    \end{cases}
\end{split}
\end{align}
where $\varrho_h^n \sim \mathcal{N}(0,I)$ are $N_h$-dimensional multivariate Gaussian and independent for each $n$. 

Before we are able to derive rates of pathwise convergence of our numerical approximation, we need a couple of estimates. To that end, it will be convenient to define $S_{h,\Delta t}(\cdot)$ as our fully discrete approximation of $S_1(\cdot)$ based on backward Euler, 
\begin{align}\label{eq:fully-discrete-semigroup-operator}
    S_{h,\Delta t}(t) :=
    \begin{cases}
        I, \quad &t = 0, \\
        r(\Delta t A_{1,h} )^{n+1}, \quad &t \in (t_n,t_{n+1}],
    \end{cases}
\end{align}
where $r(z) := (1 + z)^{-1}$, and
\begin{align*}
    b_{\Delta t}(t) := 
    \begin{cases}
        b_{\Delta t}^0, \quad &t = 0, \\
        b_{\Delta t}^n, \quad &t \in (t_n, t_{n+1}].    
    \end{cases}
\end{align*}
With these definitions at hand, our approximate mild solution \eqref{eq:fully-discrete-scheme} can be extended from discrete times to all times $t \geq 0$ as
\begin{align}\label{eq:fully-discrete-2}
    u_{h,\Delta t}(t) := S_{h,\Delta t}(t) \pi_h \xi + \int_0^t S_{h,\Delta t}(t-s) b_{\Delta t}(s) Q_k^{\gamma}(A_{2,h}) \pi_h \, dW.
\end{align}
The following lemma lists some useful estimates related to the fully discerete approximation \eqref{eq:fully-discrete-semigroup-operator} of $S_1(t)$. 
\begin{lemma}\label{lemma:error-stochastic-convolution}
    Suppose Assumption \ref{assumption:model} and \ref{assumption:fem} holds. Then for any $\theta \in [0, 2\gamma + 1 - d / 2) \cap [0,2]$, there is $C_{\theta}, c > 0$ such that
    \begin{align*}
        \int_0^t \Vert (S_1(t-s) - S_{h,\Delta t}(t-s) \pi_h) A_2^{-\gamma} \Vert_{L_2(H)}^2 \, ds \leq C_{\theta} e^{2 c \lambda t} (h^{\theta} + \Delta t^{\theta / 2})^2,
    \end{align*}
    and
    \begin{align*}
        \int_0^t \Vert S_{h,\Delta t}(t-s) \pi_h (A_2^{-\gamma} - A_{2,h}^{-\gamma} \pi_h) \Vert_{L_2(H)}^2 \, ds \leq C_{\theta} e^{2 c \lambda t} (h^{\theta} + \Delta t^{\theta / 2})^2.
    \end{align*}
    Moreover, there is $C > 0$ such that
    \begin{align*}
        \int_0^t \Vert S_{h,\Delta t}(t-s) (A_{2,h}^{-\gamma} - Q_k^{-\gamma}(A_{2,h})) \pi_h \Vert_{L_2(H)}^2 \, ds \leq C e^{2 c \lambda t} e^{-\pi^2 / k} N_h.
    \end{align*}
\end{lemma}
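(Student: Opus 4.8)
The plan is to prove the three estimates separately, since they isolate the three sources of error: the fully discrete (finite element plus backward Euler) approximation of $S_1$, the approximation $A_{2,h}^{-\gamma}\pi_h$ of the fractional power $A_2^{-\gamma}$, and the sinc quadrature $Q_k^{-\gamma}(A_{2,h})$. In all three the governing principle is the same: the Hilbert--Schmidt norm is too strong to control directly, so one factors off a Hilbert--Schmidt power of $A_2$ (finite by \ref{cond:hilbert-schmidt}) and reduces the estimate to an operator-norm bound, trading powers of $A_2$ for powers of $\lambda + A_1$ at the cost of an arbitrarily small $\epsilon > 0$, exactly as in the proof of Lemma \ref{lemma:helpful-estiamtes} via \ref{cond:A1A2}. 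The surviving operator-norm factor is a deterministic error estimate, and the admissible range of $\theta$ is dictated by integrability in $s$ of the resulting singular kernel.

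For the first estimate, write $F_\sigma := S_1(\sigma) - S_{h,\Delta t}(\sigma)\pi_h$ and, for a parameter $r$ to be chosen, split $F_{t-s}A_2^{-\gamma} = [F_{t-s}(\lambda+A_1)^{-r/2}]\,[(\lambda+A_1)^{r/2}A_2^{-\gamma}]$. The right-hand factor is Hilbert--Schmidt: by \ref{cond:A1A2} and interpolation $(\lambda+A_1)^{r/2}A_2^{-r/2-\epsilon}$ is bounded, leaving $\Vert A_2^{r/2+\epsilon-\gamma}\Vert_{L_2(H)}$, finite by \ref{cond:hilbert-schmidt} provided $r/2+\epsilon-\gamma < -d/4$. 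The left-hand factor is the standard non-smooth-data error estimate for the backward Euler finite element scheme, of the form $\Vert F_\sigma (\lambda+A_1)^{-r/2}\Vert_{L(H)} \le C e^{c\lambda\sigma}(h^\theta + \Delta t^{\theta/2})\sigma^{-(\theta-r)/2}$ for $r \le \theta \le 2$, which follows by combining the spatial estimates \ref{cond:solution-operator}--\ref{cond:fem-bound} with the classical backward Euler smoothing bounds of Lemma \ref{lemma:analytic-semigroup}, as in \cite{2025-auestad}; note that the smoothing of $S_1(\sigma)$ permits $r$ slightly negative at the price of a stronger singularity. Squaring and integrating in $s$, the kernel behaves like $(t-s)^{-(\theta-r)}$, integrable precisely when $\theta - r < 1$, so a valid choice of $r$ with $\theta - 1 < r < 2\gamma - d/2$ (after sending $\epsilon \to 0$) exists exactly when $\theta < 2\gamma + 1 - d/2$, which together with $\theta \le 2$ yields the claimed bound with all growth collected into $e^{2c\lambda t}$. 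This balancing of the convergence order $\theta$, the data-regularity parameter $r$, and the singularity exponent, uniformly in both $h$ and $\Delta t$, is the delicate point and the main obstacle of the lemma.

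The second estimate is handled in the same way, with $F_\sigma$ replaced by the uniformly bounded and smoothing operator $S_{h,\Delta t}(\sigma)\pi_h$, the error now carried by the fractional-power discretization $A_2^{-\gamma} - A_{2,h}^{-\gamma}\pi_h$. Representing $A_2^{-\gamma}$ and $A_{2,h}^{-\gamma}$ through the integral formula \eqref{eq:fractional-powers}, the difference reduces to a superposition of resolvent differences controlled by \ref{cond:solution-operator}, supplying the order $h^\theta$; after factoring off a Hilbert--Schmidt power of $A_2$ as before and using the uniform smoothing of $S_{h,\Delta t}$ to absorb the $s$-singularity, integration over $[0,t]$ gives the same $(h^\theta + \Delta t^{\theta/2})^2$ bound on the same range of $\theta$, with the identical integrability threshold $\theta < 2\gamma + 1 - d/2$.

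The third estimate is different in character, since the quadrature error $A_{2,h}^{-\gamma} - Q_k^{-\gamma}(A_{2,h})$ acts on the $N_h$-dimensional space $V_h$, where Lemma \ref{lemma:A_h-quadrature} gives the exponential operator-norm decay $\Vert (A_{2,h}^{-\gamma} - Q_k^{-\gamma}(A_{2,h}))\pi_h\Vert_{L(H)} \le C e^{-\pi^2/(2k)}$. As no smoothing of the continuous operator $A_2^{-\gamma}$ is available to convert this into a Hilbert--Schmidt gain, one simply bounds the Hilbert--Schmidt norm of the finite-rank operator by $\sqrt{N_h}$ times its operator norm; squaring produces the factors $N_h$ and $e^{-\pi^2/k}$, and the uniform bound on $S_{h,\Delta t}(\sigma)\pi_h$ makes the $s$-integral contribute only a constant and the exponential $e^{2c\lambda t}$, giving the stated inequality.
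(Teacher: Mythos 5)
Your proposal is correct in substance, but it is worth noting that the paper does not actually prove this lemma in-house: its ``proof'' consists of pointers to Lemma 4.4 and Theorem 3.1 of \cite{2025-auestad}, so what you have done is reconstruct the delegated argument from the ingredients available in this paper. Your reconstruction is consistent with that toolkit and the bookkeeping checks out: for the first estimate, your splitting $F_{t-s}A_2^{-\gamma} = [F_{t-s}(\lambda+A_1)^{-r/2}][(\lambda+A_1)^{r/2}A_2^{-\gamma}]$ uses exactly Lemma \ref{lemma:fully-discrete-semigroup-approximation} (with $\rho = r$, whose admissible range $[-1,\theta]\cap[-2+\theta,\theta]$ is respected by any $r\in(\theta-1,\min(\theta,2\gamma-d/2))$), together with the $\epsilon$-trading of powers of $A_2$ for powers of $\lambda+A_1$ via \ref{cond:A1A2} and the Hilbert--Schmidt bound \ref{cond:hilbert-schmidt}, and the two constraints $\theta-r<1$ (integrability of the squared kernel) and $r<2\gamma-d/2$ (Hilbert--Schmidt factor) do intersect precisely on the stated range $\theta<2\gamma+1-d/2$. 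The third estimate is likewise exactly right: the rank-$N_h$ bound $\Vert\cdot\Vert_{L_2(H)}\le N_h^{1/2}\Vert\cdot\Vert_{L(H)}$ combined with Lemma \ref{lemma:A_h-quadrature} is the only way to produce the $e^{-\pi^2/k}N_h$ factor. The one place where your argument leans hardest on machinery not contained in this paper is the second estimate: condition \ref{cond:solution-operator} only gives the Ritz-type error for $\alpha\in\{0,1/2\}$, and upgrading it to an error bound for the fractional power $A_2^{-\gamma}-A_{2,h}^{-\gamma}\pi_h$ via the Balakrishnan integral and resolvent differences is a genuine (if standard) piece of work --- it is precisely the content of the results in \cite{2025-auestad} and \cite{2019-bonito} that the paper cites, so your sketch identifies the right mechanism but is necessarily less detailed there than for the other two bounds.
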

\begin{proof}
    For the first two inequalities, see Lemma 4.4 in \cite{2025-auestad} (in particular the treatment of the terms $(i)$ and $(ii)$, respectively). For the last inequality, see the proof of Theorem 3.1 in \cite{2025-auestad} (in particular the treatment of the term $(iii)$). 
\end{proof}

By combining the following lemma with Theorem \ref{theorem:extension-inequality-2} and Lemma \ref{lemma:pathwise}, we can derive pathwise convergence rates for our approximation \eqref{eq:fully-discrete-scheme}.
\begin{lemma}\label{lemma:strong-rate}
    Suppose 
    \begin{enumerate}[(1)]
        \item Assumption \ref{assumption:model}, \ref{assumption:fem} and \ref{cond:b-process-approximation} holds,
        \item $\gamma \in (d/4 - 1/2, 1] \cap [0,1]$,
        \item $k \leq -\frac{\pi^2}{2}(2 \gamma + 1)^{-1} \log(h)^{-1}$,
    \end{enumerate}
    and let $u$ and $u_{h,\Delta t}$ be the solutions to \eqref{eq:mild-form} and \eqref{eq:fully-discrete-2} with $\xi = 0$, respectively. Then,
    \begin{align*}
        u(t) - u_{h,\Delta t}(t) = \int_0^t \Phi \, dW, \quad \Phi := S_1(t-\cdot) b A_2^{-\gamma} - S_{h,\Delta t}(t - \cdot) b_{\Delta t} Q_k^{-\gamma}(A_{2,h}),
    \end{align*}
    where for any $\theta \in [0, 2\gamma + 1 - d / 2) \cap [0,2]$, there is $C_{\theta}, C, c > 0$ such that
    \begin{align*}
        \int_0^t \Vert \Phi \Vert_{L_2(H)}^2 \, ds \leq C_{\theta} (\Vert b \Vert_{\infty} + K)^2 e^{2c\lambda t} (h^{\theta} + \Delta t^{\theta / 2})^2 + C (\Vert b \Vert_{C^{\beta}} + K)^2 e^{2c \lambda t} \Delta t^{2\beta}.
    \end{align*}
\end{lemma}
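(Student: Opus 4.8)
The plan is to first record the integral representation and then split the integrand into four pieces, each isolating a single source of error, so that three of them fall directly under Lemma \ref{lemma:error-stochastic-convolution}. Since $\xi = 0$, both \eqref{eq:mild-form} and \eqref{eq:fully-discrete-2} are pure stochastic convolutions, and linearity of the Itô integral (Theorem \ref{theorem:extension-inequality}) gives $u(t) - u_{h,\Delta t}(t) = \int_0^t \Phi \, dW$ with the stated $\Phi$; that $\Phi \in \mathcal{N}_W(0,t)$ follows a posteriori from the $P$-a.s.\ finiteness of the bound. Inserting the intermediate integrands $S_{h,\Delta t}(t-\cdot)\pi_h b A_2^{-\gamma}$, $S_{h,\Delta t}(t-\cdot)\pi_h b A_{2,h}^{-\gamma}\pi_h$ and $S_{h,\Delta t}(t-\cdot)\pi_h b Q_k^{-\gamma}(A_{2,h})\pi_h$, I would telescope $\Phi = \Phi_1 + \Phi_2 + \Phi_3 + \Phi_4$, where $\Phi_1$ carries the semigroup error $(S_1 - S_{h,\Delta t}\pi_h)$, $\Phi_2$ the spatial fractional-operator error $(A_2^{-\gamma} - A_{2,h}^{-\gamma}\pi_h)$, $\Phi_3$ the quadrature error $(A_{2,h}^{-\gamma} - Q_k^{-\gamma}(A_{2,h}))$, and $\Phi_4$ the scalar error $(b - b_{\Delta t})$. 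Because $b$ is scalar it commutes with every operator and, viewed as a multiplication operator, has norm $\le \Vert b\Vert_\infty$ on $H$, so Young's inequality reduces the problem to bounding each $\int_0^t \Vert \Phi_i\Vert_{L_2(H)}^2\,ds$ separately. Factoring $\Vert b\Vert_\infty$ out of $\Phi_1$ and $\Phi_2$ puts them in exactly the form of the first two estimates of Lemma \ref{lemma:error-stochastic-convolution}, each giving $C_\theta \Vert b\Vert_\infty^2 e^{2c\lambda t}(h^\theta + \Delta t^{\theta/2})^2$.

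The main subtlety is the quadrature piece $\Phi_3$. The third estimate of Lemma \ref{lemma:error-stochastic-convolution} bounds its integrated norm by $C\Vert b\Vert_\infty^2 e^{2c\lambda t} e^{-\pi^2/k} N_h$, which must be absorbed into the $(h^\theta + \Delta t^{\theta/2})^2$ term. Here hypothesis (3) is used: since $\log h < 0$ it gives $\pi^2/k \ge 2(2\gamma+1)(-\log h)$, hence $e^{-\pi^2/k} \le h^{2(2\gamma+1)}$, while quasi-uniformity \ref{cond:quasi-uniform} yields $N_h \le C h^{-d}$. Therefore $e^{-\pi^2/k} N_h \le C h^{2(2\gamma+1-d/2)} \le C h^{2\theta}$, precisely because $\theta < 2\gamma + 1 - d/2$, and this contribution too is dominated by $(h^\theta + \Delta t^{\theta/2})^2$. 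Getting this exponent bookkeeping to line up with the admissible range of $\theta$ is the one genuinely delicate step.

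For the scalar piece $\Phi_4$ I would factor out $|b(s) - b_{\Delta t}(s)|$; on each $(t_n, t_{n+1}]$ the triangle inequality together with $\beta$-Hölder continuity of $b$ and condition \ref{cond:b-process-approximation} gives $|b(s) - b_{\Delta t}^n| \le |b(s)-b(t_n)| + |b(t_n) - b_{\Delta t}^n| \le (\Vert b\Vert_{C^\beta} + K)\Delta t^\beta$. It remains to see that $\int_0^t \Vert S_{h,\Delta t}(t-s) Q_k^{-\gamma}(A_{2,h})\pi_h\Vert_{L_2(H)}^2\,ds \le C e^{2c\lambda t}$; this follows by writing $S_{h,\Delta t}(t-\cdot)Q_k^{-\gamma}(A_{2,h})\pi_h = S_1(t-\cdot)A_2^{-\gamma} - (\Phi_1+\Phi_2+\Phi_3)|_{b\equiv 1}$, whose leading term has integrated square $O(e^{2\lambda t})$ by Lemma \ref{lemma:helpful-estiamtes}(b) and whose error pieces are controlled by the estimates just derived (all $\le 1$ for $h,\Delta t \le 1$). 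Collecting the four contributions and using $\Vert b\Vert_\infty \le \Vert b\Vert_\infty + K$ yields the stated bound, with $c \ge 1$ chosen uniformly.
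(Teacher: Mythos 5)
Your proposal is correct and follows essentially the same route as the paper: a four-way telescoping of $\Phi$ into semigroup, fractional-operator, quadrature, and scalar-approximation errors, Young's inequality, Lemma \ref{lemma:error-stochastic-convolution} for the first three pieces (with hypothesis (3) and $N_h \leq C h^{-d}$ absorbing the quadrature term), and the Hölder bound from \ref{cond:b-process-approximation} for the scalar piece. The only difference is cosmetic: the paper places the scalar error on the continuous factor $S_1(t-\cdot)A_2^{-\gamma}$ (so its stability is immediate from Lemma \ref{lemma:helpful-estiamtes}), whereas you place it on the fully discrete factor and recover stability by a triangle-inequality detour back through the continuous object, which also works.
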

\begin{proof}
    Note first that owing to \ref{cond:b-process-approximation}, we have
    \begin{align*}
        \Vert b - b_{\Delta t} \Vert_{\infty} \leq \Vert b \Vert_{C^{\beta}} \Delta t^{\beta} + \sup_{n = 0, \dots, N} \vert b(t_n) - b_{\Delta t}^n \vert \leq (\Vert b \Vert_{C^{\beta}} + K) \Delta t^{\beta},
    \end{align*}
    and $\Vert b_{\Delta t} \Vert_{\infty} \leq \Vert b \Vert_{\infty} + K \Delta t^{\beta}$, $P$-a.s.

    For $s \in [0,t]$
    \begin{align*}
        \Phi(s) &= S_1(t-s) (b(s) - b_{\Delta t}(s)) A_2^{-\gamma} \\
        &\quad + (S_1(t-s) - S_{h,\Delta t}(t-s) \pi_h) b_{\Delta t}(s) A_2^{-\gamma} \\
        &\quad + S_{h,\Delta t}(t-s) \pi_h b_{\Delta t}(s) (A_2^{-\gamma} - A_{2,h}^{-\gamma} \pi_h) \\
        &\quad + S_{h,\Delta t}(t-s) b_{\Delta t}(s) (A_{2,h}^{-\gamma} - Q_k^{-\gamma}(A_{2,h})) \pi_h) \\
        &=: (i) + (ii) + (iii) + (iv),
    \end{align*}
    and by Young's inequality
    \begin{align*}
        \int_0^t \Vert \Phi \Vert_{L_2(H)}^2 \, ds \leq C \int_0^t (\Vert (i) \Vert_{L_2(H)}^2 + \cdots + \Vert (iv) \Vert_{L_2(H)}^2) \, ds.
    \end{align*}

    For $(i)$, we get by \ref{cond:hilbert-schmidt} that for $0 < \epsilon < \gamma - d / 4 + 1 / 2$ 
    \begin{align*}
        \int_0^t \Vert (i) \Vert_{L_2(H)}^2 \, ds &= \int_0^t \Vert S_1(t-s) (b(s) - b_{\Delta t}(s)) A_2^{-\gamma} \Vert_{L_2(H)}^2 \, ds \\
        &\leq \int_0^t \Vert S_1(t-s) A_2^{1/2 - \epsilon} \Vert_{L(H)}^2 \Vert b - b_{\Delta t} \Vert_{\infty}^2 \Vert A_2^{-\gamma - 1 / 2 + \epsilon} \Vert_{L_2(H)}^2 \, ds \\
        &\leq C_{\epsilon} e^{2 \lambda t} \Vert b - b_{\Delta t} \Vert_{\infty}^2,
    \end{align*}
    where we used the estimate $\Vert S_1(t-s) A_2^{\alpha} \Vert_{L(H)} \leq C e^{(\lambda - \delta) (t-s)} (t-s)^{-\alpha}$, $\alpha \in [0,1/2]$ (which follows by Lemma \ref{lemma:analytic-semigroup}, condition \ref{cond:A1A2}, and interpolation) for the first factor in the integrand.
    
    For $(ii)$ Lemma \ref{lemma:error-stochastic-convolution} gives
    \begin{align*}
        \int_0^t \Vert (ii) \Vert_{L_2(H)}^2 \, ds &= \int_0^t \Vert (S_1(t-s) - S_{h,\Delta t}(t-s) \pi_h) b_{\Delta t}(s) A_2^{-\gamma} \Vert_{L_2(H)}^2 \, ds \\
        &\leq \Vert b_{\Delta t} \Vert_{\infty}^2 \int_0^t \Vert (S_1(t-s) - S_{h,\Delta t}(t-s) \pi_h) A_2^{-\gamma} \Vert_{L_2(H)}^2 \, ds \\
        &\leq C_{\theta} \Vert b_{\Delta t} \Vert_{\infty}^2 e^{2 c \lambda t} (h^{\theta} + \Delta t^{\theta / 2})^2,
    \end{align*}
    where we used that scaling by $b_{\Delta t}(s)$ viewed as an operator $H \to H$ has operator norm bounded by $\Vert b_{\Delta t} \Vert_{\infty}$.
    
    For $(iii)$, we get by using Lemma \ref{lemma:error-stochastic-convolution}
    \begin{align*}
        \int_0^t \Vert (iii) \Vert_{L_2(H)}^2 \, ds &= \int_0^t \Vert S_{h,\Delta t}(t-s) \pi_h b_{\Delta t}(s) (A_2^{-\gamma} - A_{2,h}^{-\gamma} \pi_h) \Vert_{L_2(H)}^2 \, ds \\
        &\leq \Vert b_{\Delta t} \Vert_{\infty}^2 \int_0^t \Vert S_{h,\Delta t}(t-s) \pi_h (A_2^{-\gamma} - A_{2,h}^{-\gamma} \pi_h) \Vert_{L_2(H)}^2 \, ds \\
        &\leq C_{\theta} \Vert b_{\Delta t} \Vert_{\infty}^2 e^{2 c \lambda t} (h^{\theta} + \Delta t^{\theta / 2})^2.
    \end{align*}
    
    Finally, for $(iv)$, we have using Lemma \ref{lemma:error-stochastic-convolution}
    \begin{align*}
        \int_0^t \Vert (iv) \Vert_{L_2(H)}^2 \, ds &= \int_0^t \Vert S_{h,\Delta t}(t-s) b_{\Delta t}(s) (A_{2,h}^{-\gamma} - Q_k^{-\gamma}(A_{2,h})) \pi_h \Vert_{L_2(H)}^2 \\
        &\leq \Vert b_{\Delta t} \Vert_{\infty}^2 \int_0^t \Vert S_{h,\Delta t}(t-s) (A_{2,h}^{-\gamma} - Q_k^{-\gamma}(A_{2,h})) \pi_h \Vert_{L_2(H)}^2 \\
        &\leq C \Vert b_{\Delta t} \Vert_{\infty}^2 e^{2 c \lambda t} e^{-\pi^2 / k} N_h.
    \end{align*} 
    Using the bound on $k$ in terms of $h$, the bound $N_h \leq C h^{-d}$ which follows from \ref{cond:quasi-uniform}, and Lemma \ref{lemma:error-stochastic-convolution}, this term is bounded by $C \Vert b_{\Delta t} \Vert_{\infty}^2 e^{2c \lambda t} (h^{2\gamma + 1 - d/2})^2$. If $\gamma = 1$, this term vanishes. By combining the four terms and using the estimates of $\Vert b - b_{\Delta t} \Vert_{\infty}$ and $\Vert b_{\Delta t} \Vert_{\infty}$ due to \ref{cond:b-process-approximation}, we get the estimate. 
\end{proof}

We are now ready to derive pathwise convergence rates for our numerical approximation \eqref{eq:fully-discrete-scheme}.
\begin{theorem}\label{theorem:pathwise-rate}
    Suppose the conditions of Lemma \ref{lemma:strong-rate} holds. Then, for any $\theta \in [0, 2\gamma + 1 - d/2) \cap [0,2]$, $\rho \in [-1,\theta] \cap [-2 + \theta, \theta]$, $\epsilon > 0$ small, and sequences $h_n, \Delta t_n$ such that
    \begin{align*}
        \sum_{n = 1}^{\infty} (h_n^{\theta} + \Delta t_n^{\frac{\theta}{2} \wedge \beta})^p < \infty,
    \end{align*}
    for some $p > 0$, there is a random variable $M_{\theta, \epsilon} > 0$ and constants $C, c > 0$, ensuring that
    \begin{align*}
        \Vert u(t) - u_{h_n, \Delta t_n}(t) \Vert_H &\leq e^{c \lambda t}\bigg(C t^{-\theta/2 + \rho / 2} \Vert (\lambda + A_1)^{\rho / 2} \xi \Vert_H + M_{\theta, \epsilon} \bigg) \\
        &\quad \times (h_n^{\theta} + \Delta t_n^{(\frac{\theta}{2}\wedge \beta)})^{1-\epsilon}, \quad P\text{-a.s.}
    \end{align*}
\end{theorem}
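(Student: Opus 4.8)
The plan is to split the error into a deterministic contribution coming from the initial data and a stochastic contribution coming from the convolution, and to treat them separately. Using the mild forms \eqref{eq:mild-form} and \eqref{eq:fully-discrete-2},
\[
u(t) - u_{h_n,\Delta t_n}(t) = (S_1(t) - S_{h_n,\Delta t_n}(t)\pi_h)\xi + \int_0^t \Phi_n\,dW,
\]
where $\Phi_n$ is the integrand isolated in Lemma \ref{lemma:strong-rate} (the $\xi = 0$ case). For the deterministic term I would invoke the standard non-smooth-data error estimate for the backward-Euler finite element approximation of an analytic semigroup, namely $\Vert (S_1(t) - S_{h,\Delta t}(t)\pi_h)(\lambda + A_1)^{-\rho/2} \Vert_{L(H)} \leq C e^{c\lambda t} t^{-\theta/2 + \rho/2}(h^\theta + \Delta t^{\theta/2})$, valid for $\rho \in [-1,\theta] \cap [\theta - 2, \theta]$. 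This follows from \ref{cond:solution-operator}, \ref{cond:fem-bound}, analyticity and interpolation exactly as in \cite{2025-auestad}, and is purely deterministic, so applying it to the (random) $\xi$ gives the first term of the claimed bound pathwise. Since $\Delta t^{\theta/2} \leq \Delta t^{\theta/2 \wedge \beta}$ and $x \leq x^{1-\epsilon}$ for $x \in [0,1]$, this term is dominated by $C e^{c\lambda t} t^{-\theta/2 + \rho/2} \Vert (\lambda + A_1)^{\rho/2}\xi \Vert_H (h_n^\theta + \Delta t_n^{\theta/2 \wedge \beta})^{1-\epsilon}$ once the discretization is fine enough.

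The core of the argument is the stochastic term, where the goal is to upgrade the $L^2$-type rate of Lemma \ref{lemma:strong-rate} to a pathwise rate, for fixed $t$. Writing $r_n := h_n^\theta + \Delta t_n^{\theta/2 \wedge \beta}$ and $R^2 := C_\theta(\Vert b \Vert_\infty + \Vert b \Vert_{C^\beta} + 2K)^2$ (an $\mathcal{F}_T$-measurable, a.s. finite random variable), Lemma \ref{lemma:strong-rate} together with $\Delta t^{\theta/2} + \Delta t^\beta \leq 2\Delta t^{\theta/2 \wedge \beta}$ yields $\int_0^t \Vert \Phi_n \Vert_{L_2(H)}^2\,ds \leq R^2 e^{2c\lambda t} r_n^2$. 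I would then rescale, applying Theorem \ref{theorem:extension-inequality-2} to the sequence $\{\Phi_m/(e^{c\lambda t} r_m^{1-\epsilon})\}_{m \geq n}$, for which $\int_0^t \Vert \Phi_m/(e^{c\lambda t}r_m^{1-\epsilon}) \Vert_{L_2(H)}^2\,ds \leq R^2 r_m^{2\epsilon}$, so that the right-hand side of Theorem \ref{theorem:extension-inequality-2} becomes $C_p E[1 \wedge R^p \sum_{m \geq n} r_m^{\epsilon p}]$. Choosing $p \geq \max(2, p_0/\epsilon)$, where $p_0$ is the exponent from the summability hypothesis, forces $\sum_m r_m^{\epsilon p} < \infty$ (as $r_m \to 0$), so the tail $\sum_{m \geq n} r_m^{\epsilon p} \to 0$ and dominated convergence gives that the whole expression tends to $0$ as $n \to \infty$. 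Since the supremum over the martingale's time dominates the value at $t$, this yields $E[1 \wedge \sum_{m \geq n} \Vert \int_0^t \Phi_m\,dW \Vert_H^p/(e^{c\lambda t} r_m^{1-\epsilon})^p] \to 0$, and Lemma \ref{lemma:pathwise} (with $x_{h_m} = \Vert \int_0^t \Phi_m\,dW \Vert_H$ and rate $e^{c\lambda t} r_m^{1-\epsilon}$) produces a random variable bounding the stochastic error by $M_{\theta,\epsilon} e^{c\lambda t} r_n^{1-\epsilon}$ $P$-a.s. The loss of $\epsilon$ is exactly the price of trading the finite $p$-th moment, unavailable in $L^0$, for the summability $\sum_m r_m^{\epsilon p} < \infty$.

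The main obstacle is obtaining the stochastic estimate uniformly in $t \in (0,T]$ with a single random variable $M_{\theta,\epsilon}$, as the statement requires. The integrand $\Phi_n$ depends on the endpoint $t$ through the convolution kernels $S_1(t - \cdot)$ and $S_{h,\Delta t}(t - \cdot)$, so the supremum in Theorem \ref{theorem:extension-inequality-2}, which runs over the martingale's own time, does not directly control $\sup_t \Vert \int_0^t \Phi_n\,dW \Vert_H$; moreover indexing Theorem \ref{theorem:extension-inequality-2} over a countable dense set of times fails because the resulting sum over times diverges, while reducing to the $N_n = T/\Delta t_n$ grid points introduces an unbounded factor $N_n$ that the summability hypothesis cannot absorb. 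I would remove the endpoint dependence by the Da Prato--Kwapień--Zabczyk factorization, writing each convolution as a fixed deterministic singular operator applied to a fractionally weighted Itô process $Y_\alpha(\sigma) = \int_0^\sigma (\sigma - s)^{-\alpha} S_1(\sigma - s) b(s) A_2^{-\gamma}\,dW$ (and its discrete analogue), whose $L^q(0,T;H)$-norm controls the supremum uniformly in $t$; the $L^0$-machinery above is then applied to $Y_\alpha$ rather than to the convolution directly. Making this factorization rigorous for the piecewise-constant discrete semigroup $S_{h,\Delta t}$, while preserving the rate $r_n$ of Lemma \ref{lemma:strong-rate}, is the delicate point; if one is content with the estimate for each fixed $t$ (with $M_{\theta,\epsilon}$ depending on $t$), the factorization can be bypassed and the argument of the previous paragraph suffices.
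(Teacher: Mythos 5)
Your proposal follows essentially the same route as the paper: the same decomposition into the initial-data term (handled by the deterministic nonsmooth-data estimate of Lemma \ref{lemma:fully-discrete-semigroup-approximation}) and the stochastic convolution difference, with the latter upgraded to a pathwise rate by feeding the bound of Lemma \ref{lemma:strong-rate} into Theorem \ref{theorem:extension-inequality-2} for the rescaled errors, choosing the exponent large relative to $\epsilon$, and concluding with Lemma \ref{lemma:pathwise}. The uniformity-in-$t$ concern you raise at the end is moot: the paper's own proof fixes $t$ and lets $M_{\theta,\epsilon}$ depend on it, so the factorization machinery is not needed to match the paper's argument.
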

\begin{proof}
    We can express the error
    \begin{align*}
        u(t) - u_{h,\Delta t}(t) &= S_1(t) \xi - S_{h,\Delta t}(t) \pi_h \xi \\
        &\quad +\int_0^t (S_1(t-s) b(s) A_2^{-\gamma} \, dW - S_{h,\Delta t}(t-s) b_{\Delta t}(s) Q_k^{-\gamma}(A_{2,h}) \pi_h \, dW) \\
        &= (i) + (ii).
    \end{align*}
    By Lemma \ref{lemma:fully-discrete-semigroup-approximation} we have
    \begin{align*}
        \Vert (i) \Vert_H \leq C e^{c \lambda t} t^{-\theta/2 + \rho/2} (h^{\theta} + \Delta t^{\theta/2}) \Vert (\lambda + A_1)^{\rho / 2} \xi \Vert_H, \quad P\text{-a.s.}
    \end{align*}
    for any $\theta \in [0,2]$ and $\rho \in [-1, \theta] \cap [-2 + \theta, \theta]$. 
    
    For $(ii)$, note first that by Lemma \ref{lemma:strong-rate}, we must have for $\theta \in [0, 2\gamma + 1 - d / 2) \cap [0,2]$
    \begin{align*}
        &\int_0^t \Vert S_1(t-s) b(s) A_2^{-\gamma} - S_{h,\Delta t}(t-s) b_{\Delta t}(s) Q_k^{-\gamma}(A_{2,h}) \pi_h \Vert_{L_2(H)}^2 \, ds \\
        &\qquad \leq C_{\theta} (\Vert b \Vert_{C^{\beta}} + K)^2 e^{2c\lambda t} (h^{\theta} + \Delta t^{\frac{\theta}{2} \wedge \beta})^2.
    \end{align*}
    Therefore, by Theorem \ref{theorem:extension-inequality-2} it follows that for sequences $h_n, \Delta t_n$ as in this theorem, and for any $\epsilon > 0$, $p' \geq 2$
    \begin{align*}
        E[1 \wedge \sum_{m \geq n} \frac{\Vert (ii) \Vert_H^{p'}}{(h_m^{\theta} + \Delta t_m^{\frac{\theta}{2} \wedge \beta})^{(1-\epsilon)p'}}] \leq C_{p'} E[1 \wedge C_{\theta} (\Vert b \Vert_{C^{\beta}} + K)^{p'} e^{p' c \lambda t} \sum_{m \geq n} (h_m^{\theta} + \Delta t_m^{\frac{\theta}{2} \wedge \beta})^{p' \epsilon}] \to 0,
    \end{align*}
    as $n \to \infty$, provided we choose $p' \geq p \epsilon^{-1}$. Therefore, by Lemma \ref{lemma:pathwise}, there is a random variable $M_{\theta,\epsilon} > 0$, satisfying
    \begin{align*}
        \Vert (ii) \Vert_H \leq M_{\theta,\epsilon} e^{c\lambda t} (h_n^{\theta} + \Delta t_n^{\frac{\theta}{2} \wedge \beta})^{1 - \epsilon}, \quad P\text{-a.s.}
    \end{align*}
    By combining the two terms, we get the inequality of this theorem. 
\end{proof}

\subsection{Numerical experiments}\label{section:numerical-experiments}

We now consider the specific case $A_1 = -\Delta$, $A_2 = I - \Delta$, where $\Delta$ is the Laplacian with zero Neumann boundary conditions, and $\xi = 0$. Further, we let $b := e^{f^2}$, where $f := (I - \partial_t^2)^{-1} \mathcal{W}$, $\partial_t^2$ is the (one dimensional) Laplacian with zero Neumann boundary conditions, and $\mathcal{W}$ is white noise on $L^2(0,T)$, independent of $W$. Moreover, we let $b_{\Delta t}^n := e^{(f_{\Delta t}^n)^2}$ where $f_{\Delta t}^n$ are the nodal basis coefficients of a finite element approximation of $f$ with a uniform mesh of size $\Delta t$. In this case, one may verify that Assumption \ref{assumption:model} and \ref{assumption:fem} holds (see, e.g., Remark 2.3 and 2.8 in \cite{2025-auestad}). In particular, condition \ref{cond:continuity-coercivity} holds with any $\lambda > 0$. Moreover, condition \ref{cond:b-process-approximation} holds for any $\beta < 1$. 

To see that \ref{cond:b-process-approximation} holds, note first that since $e^{(\cdot)^2} \in C^1([0,T])$ it suffices to verify that the Hölder continuity and estimate of \ref{cond:b-process-approximation} holds for the process $f$ and coefficients $f_{\Delta t}^n$. We start by verifying the Hölder continuity of $f$. We assume for simplicity and without loss of generality that $T = 1$, define $e_n(t) := 2^{1/2} \cos(\pi n t), n > 0$, $e_0(t) := 1$, and note that
\begin{align}\label{eq:example-f}
    f(t) = \sum_{n = 0}^{\infty} (1 + \pi^2 n^2)^{-1} \xi_n e_n(t), \quad P\text{-a.s.}
\end{align}
for some independent $\xi_n \sim \mathcal{N}(0,1)$. By using that $\vert e_n(t) - e_n(s) \vert \leq 2^{1/2} \pi n \vert t - s \vert$, one finds
\begin{align*}
    E[\vert f(t) - f(s) \vert^2] = E[ ( \sum_{n = 0}^{\infty} (1 + \pi^2 n^2)^{-1} \xi_n (e_n(t) - e_m(t) )^2 ] \leq C \vert t - s \vert^2,
\end{align*}
and so the $\beta$-Hölder continuity of $f$ follows by the Kolmogorov continuity test for Gaussian processes. (Similarly, using the expression $\dot{f}(t) = -\sum_n (1 + \pi^2 n^2)^{-1} \pi n \xi_n 2^{1/2} \sin(\pi n t)$ and inequality $\vert \sin(\pi n t) - \sin(\pi n s) \vert \leq C n^{\alpha} \vert t  - s \vert^{\alpha}$ for $\alpha \in [0,1]$, one may verify that $f \in C^{1,\beta}([0,T])$ for any $\beta < 1/2$, $P$-a.s.).

To show the estimate of \ref{cond:b-process-approximation}, let $B := I-\partial_t^2$, defined in the usual sense by a coercive and continuous bilinear form defined on the subspace of $H^1(0,1)$ with zero Neumann boundary conditions, and let $B_{\Delta t}$ be defined analogously as in \ref{cond:discrete-operator}, for a finite elements space $V_{\Delta t}$ having uniform mesh with size $\Delta t$, and $L^2(0,1)$-orthogonal projection $\pi_{\Delta t} : L^2(0,1) \to V_{\Delta t}$. In this case, one may verify that (see, e.g., Chapter 7.3 in \cite{yagi})
\begin{align}\label{eq:numerical-example-condition-1}
    \Vert B^{\alpha / 2} (B^{-1} - B_{\Delta t}^{-1} \pi_{\Delta t}) \Vert_{L(L^2)} \leq C \Delta t^{2 - \alpha}, \quad \Vert (I - \pi_{\Delta t}) B^{-\alpha} \Vert_{L(L^2)} \leq C \Delta t^{2\alpha},
\end{align}
for $\alpha \in [0,1]$, while
\begin{align}\label{eq:numerical-example-condition-2}
    \Vert B^{-\alpha} \Vert_{L_2(L^2)} \leq C_{\alpha},
\end{align}
for any $\alpha > 1 / 4$, and finally
\begin{align}\label{eq:numerical-example-condition-3}
    \Vert \pi_{\Delta t} \Vert_{L_2(L^2)} \leq C \Delta t^{-1/2}.
\end{align}
Further, for any $\epsilon > 0$ and $f \in D(B^{1/4 + \epsilon})$, with $f = \sum_n \alpha_n e_n$, we have $\Vert B^{1/4 + \epsilon} f \Vert_{L^2} = \sum_n \alpha_n^2 (1 + \pi^2 n^2)^{1/2 + 2\epsilon}$, and
\begin{align*}
    \Vert f \Vert_{\infty} \leq 2^{1/2}\sum_{n = 0}^{\infty} \vert \alpha_n \vert = 2^{1/2}\sum_{n = 0}^{\infty} \vert \alpha_n \vert n^{1 / 2 + 2\epsilon} n^{-1 / 2 - 2\epsilon} \leq C_{\epsilon} (\sum_{n = 0}^{\infty} \alpha_n^2 n^{1 + 4 \epsilon})^{1/2} \leq C_{\epsilon} \Vert B^{1/4 + \epsilon} f \Vert_{L^2},
\end{align*}
by Cauchy--Schwarz. Therefore, we have, for any $\epsilon > 0$
\begin{align*}
    \sup_{n = 0, \dots, N} \vert f(t_n) - f_{\Delta t}^n \vert \leq \Vert (B^{-1} - B_{\Delta t}^{-1} \pi_{\Delta t}) \mathcal{W} \Vert_{\infty} \leq C_{\epsilon} \Vert B^{1/4 + \epsilon} (B^{-1} - B_{\Delta t}^{-1} \pi_{\Delta t}) \mathcal{W} \Vert_{L^2}.
\end{align*}
Hence, by Lemma \ref{lemma:pathwise} it suffices to show that for $\Delta t_m = m^{-1}$ (recall $T = 1$ for simplicity) and some $p > 0$
\begin{align*}
    E[1 \wedge \sum_{m \geq n} \frac{\Vert B^{1/4 + \epsilon} (B^{-1} - B_{\Delta t_m}^{-1} \pi_{\Delta t_m}) \mathcal{W} \Vert_{L^2}^p}{\Delta t_m^{\beta p}}] \to 0, \quad \text{as } n \to \infty.
\end{align*}
Observing that for any $A \in L_2(L^2(0,1))$, the law of $A \mathcal{W}$ is the same as that of $\int_0^1 A \, dW$ for a cylindrical Wiener process $W$ on $L^2(0,1)$, Theorem \ref{theorem:extension-inequality-2} gives
\begin{align*}
    &E[1 \wedge \sum_{m \geq n} \frac{\Vert B^{1/4 + \epsilon} (B^{-1} - B_{\Delta t_m}^{-1} \pi_{\Delta t_m}) \mathcal{W} \Vert_{L^2}^p}{\Delta t_m^{\beta p}}] \\
    &\qquad \leq C_p E[1 \wedge \sum_{m \geq n} \frac{\Vert B^{1/4 + \epsilon}(B^{-1} - B_{\Delta t_m}^{-1} \pi_{\Delta t_m}) \Vert_{L_2(L^2)}^p}{\Delta t_m^{\beta p}}],
\end{align*}
for any $p \geq 2$, where
\begin{align*}
    \Vert B^{1/4 + \epsilon} (B^{-1} - B_{\Delta t}^{-1} \pi_{\Delta t}) \Vert_{L_2(L^2)} &= \Vert (I - \pi_{\Delta t}) B^{-3/4 + \epsilon} + \pi_{\Delta t} B^{1/4 + \epsilon} (B^{-1} - B_{\Delta t}^{-1} \pi_{\Delta t}) \Vert_{L_2(L^2)} \\
    &\leq \Vert (I - \pi_{\Delta t}) B^{-3/4 + \epsilon} \Vert_{L_2(L^2)} \\
    &\quad + \Vert \pi_{\Delta t} B^{1/4 + \epsilon} (B^{-1} - B_{\Delta t}^{-1} \pi_{\Delta t}) \Vert_{L_2(L^2)} \\
    &=: \Vert (i) \Vert_{L_2(L^2)} + \Vert (ii) \Vert_{L_2(L^2)}.
\end{align*}
We have by \eqref{eq:numerical-example-condition-1}, \eqref{eq:numerical-example-condition-2} and \eqref{eq:numerical-example-condition-3}
\begin{align*}
    \Vert (i) \Vert_{L_2(L^2)} \leq \Vert (I - \pi_{\Delta t}) B^{-1/2 + 2\epsilon} \Vert_{L(L^2)} \Vert B^{-1/4 - \epsilon} \Vert_{L_2(L^2)} \leq C_{\epsilon} \Delta t^{1 - 4\epsilon},
\end{align*}
while
\begin{align*}
    \Vert (ii) \Vert_{L_2(L^2)} \leq \Vert B^{1/4 + \epsilon} (B^{-1} - B_{\Delta t}^{-1} \pi_{\Delta t}) \Vert_{L(L^2)} \Vert \pi_{\Delta t} \Vert_{L_2(L^2)} \leq C \Delta t^{1 - 2 \epsilon}. 
\end{align*}
Hence,
\begin{align*}
    E[1 \wedge \sum_{m \geq n} \frac{\Vert B^{1/4 + \epsilon} (B^{-1} - B_{\Delta t_m} \pi_{\Delta t_m}) \Vert_{L_2(L^2)}^p}{\Delta t_m^{\beta p}}] \leq E[1 \wedge C_{\epsilon} \sum_{m \geq n} \Delta t_m^{(1-4\epsilon - \beta) p}] \to 0,
\end{align*}
for any $\beta < 1 - 4\epsilon$ provided we choose $p = 2 (1-4\epsilon-\beta)^{-1}$, since $\Delta t_m = m^{-1}$. Since $\epsilon > 0$ can be chosen arbitrarily small, the estimate of \ref{cond:b-process-approximation} follows.

Moreover, one easily verifies that $b \notin L^2(\Omega ; L^2(0,1))$, and so $S_1(t - \cdot) b A_2^{-\gamma} \notin L^2(\Omega ; L^2(0,t ; L_2(H)))$, from which we gather that the standard $L^2$-dependent approaches no longer applies in this case. To see this, note that by \eqref{eq:example-f}, we have $E[\vert f(t) \vert^2] \geq 1$. Since $f(t)$ is Gaussian, it has a probability density function which cannot decay faster than $e^{-f(t)^2 / 2}$, ensuring that $e^{f(t)^2} \notin L^2(\Omega ; \mathbb{R})$ for any $t$. 

We approximate the relative pathwise error at time $t = 1$ by
\begin{align}\label{eq:relative-pathwise-error}
    e_{h,\Delta t} := \frac{\Vert u_{h,\Delta t}(1)-u_{\tilde{h},\widetilde{\Delta t}}(1) \Vert_H}{\Vert u_{\tilde{h},\widetilde{\Delta t}}(1)\Vert_H},
\end{align}
where a coarse approximation, $u_{h, \Delta t}(1)$, is compared to a reference solution, $u_{\tilde{h}, \widetilde{\Delta t}}(1)$, based on a finer space and time resolution, $\tilde{h}$, $\widetilde{\Delta t}$. In order to compute $u_{h,\Delta t}$ using the same cylindrical Wiener process $W$ that was used for computing $u_{\tilde{h}, \widetilde{\Delta t}}$ we observe the following: for an appropriate coarser time resolution, $\Delta t$, we can construct increments of $\pi_{\tilde{h}} W$ by summing increments from the reference time resolution. Moreover, for a coarser space resolution where we can express any nodal basis function $\varphi_n = \sum_j \varphi_n(\tilde{x}_j) \tilde{\varphi_j}$, with $\tilde{x}_j$ a vertex of the finer mesh and $\tilde{\varphi}_j$ its nodal basis function, we can compute $\varrho_h^n$ in \eqref{eq:scheme-basis-coefficients} from $\varrho_{\tilde{h}}^{n}$ by
\begin{align*}
    \Delta t^{1/2} M_h^{1/2} \varrho_h^n &= 
    \begin{pmatrix}
        (\varphi_1, \pi_h (W(t_{n+1}) - W(t_n)))_H \\
        \vdots \\
        (\varphi_{N_h}, \pi_h (W(t_{n+1}) - W(t_n)))_H
    \end{pmatrix}
    \\
    &= A 
    \begin{pmatrix}
        (\tilde{\varphi}_1, \pi_{\tilde{h}} (W(t_{n+1}) - W(t_n)))_H \\
        \vdots \\
        (\tilde{\varphi}_{N_{\tilde{h}}}, \pi_{\tilde{h}} (W(t_{n+1}) - W(t_n)))_H
    \end{pmatrix}
    \\
    &= \Delta t^{1/2} A M_{\tilde{h}}^{1/2} \varrho_{\tilde{h}}^n,
\end{align*}
where $A_{ij} = \varphi_i(\tilde{x}_j)$. 

Hence, realizations of $e_{h,\Delta t}$ in \eqref{eq:relative-pathwise-error} may be computed as follows: we simulate a realization of the reference and coarse solution, $u_{\tilde{h},\widetilde{\Delta t}}(1) = \sum_{j = 1}^{N_{\tilde{h}}} \tilde{\alpha}_j \tilde{\varphi}_j$ and $u_{h,\Delta t}(1) = \sum_{j = 1}^{N_h}\alpha_j \varphi_j$. Then,
\begin{align}\label{eq:relative-pathwise-error-approxmiation}
    e_{h,\Delta t}^2 := \frac{ (A^T\alpha - \tilde{\alpha})^T M_{\tilde{h}}(A^T\alpha-\tilde{\alpha})}{ \tilde{\alpha}^T M_{\tilde{h}} \tilde{\alpha}}.
\end{align}

\begin{example}\label{ex:1d-rates}
We consider \eqref{eq:example-model} with $A_1$ and $A_2$ as in this section, where $\mathcal{D} = (0,1)$. Moreover, $k = 0.5$, $\widetilde{\Delta t} = 2^{-20}$ and $\tilde{h} = 2^{-11}$. The first row in Figure \ref{fig:rates} shows the experimental convergence rate in space and time for $\gamma = 0, 0.25, 0.5$ and $0.75$, together with the corresponding theoretical rates from Theorem \ref{theorem:pathwise-rate}.
\end{example}

\begin{example}\label{ex:2d-rates}
We consider \eqref{eq:example-model} with $A_1$ and $A_2$ as in this section, where $\mathcal{D} = (0,1)^2$. Moreover, $k = 0.5$, $\widetilde{\Delta t} = 2^{-19}$ and $\tilde{h} = 2^{-6.5}$. The second row in Figure \ref{fig:rates} shows the experimental convergence rate in space and time for $\gamma = 0.25, 0.5, 0.75$ and $1$, together with the corresponding theoretical rates from Theorem \ref{theorem:pathwise-rate}.
\end{example}

In Example \ref{ex:1d-rates} and \ref{ex:2d-rates} computations are sped up by taking advantage of the fact that $(1-\Delta)^{-\gamma}$ and $\Delta$ commute so that $(1-\Delta)^{-\gamma}$ only needs to be applied at the final time.

\begin{figure}
    \centering
    \subcaptionbox{Dashed lines show rates $\frac{1}{2}, 1, \frac{3}{2}, 2$.}{
        \includegraphics[width = 6.5cm]{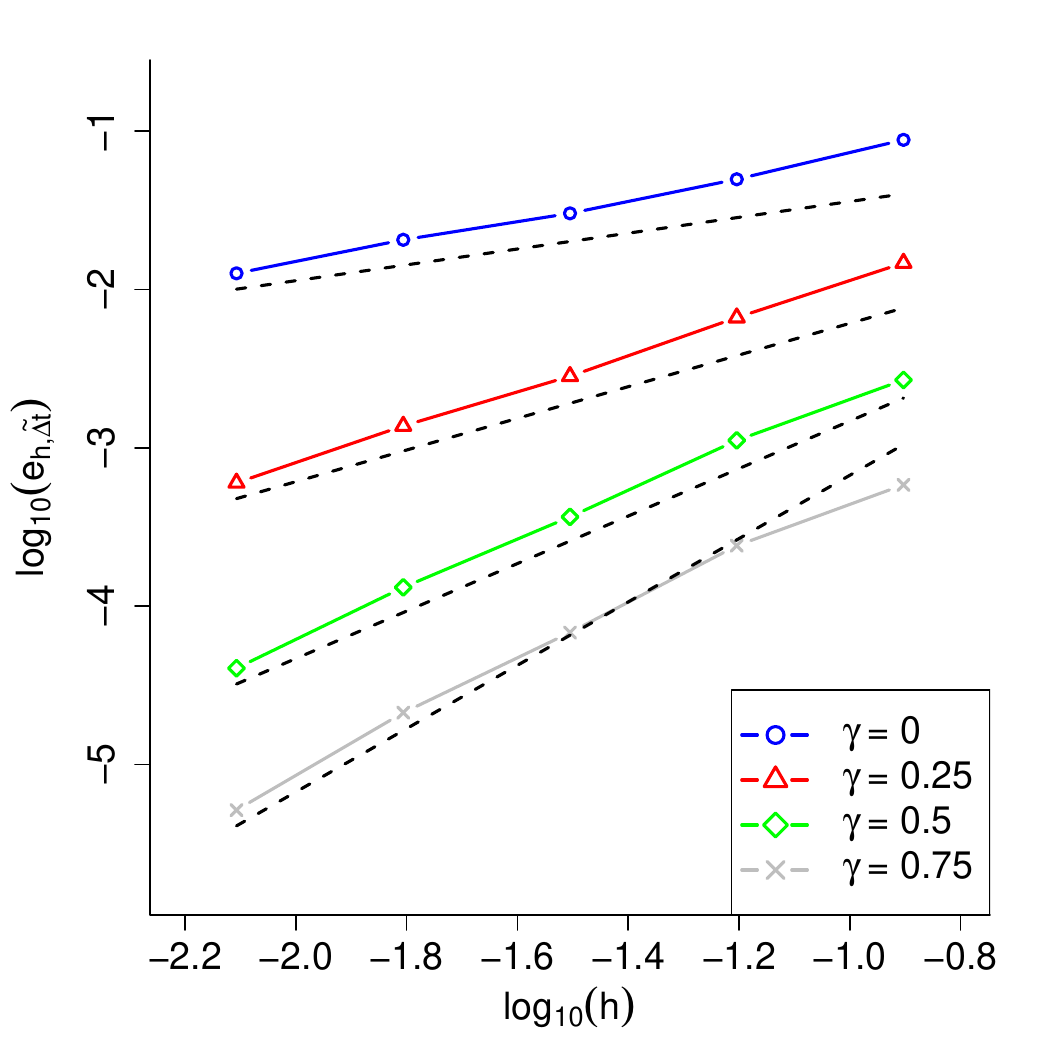}
    }
    \subcaptionbox{Dashed lines show rates $\frac{1}{4},\frac{1}{2},\frac{3}{4}, 1$}{
        \includegraphics[width = 6.5cm]{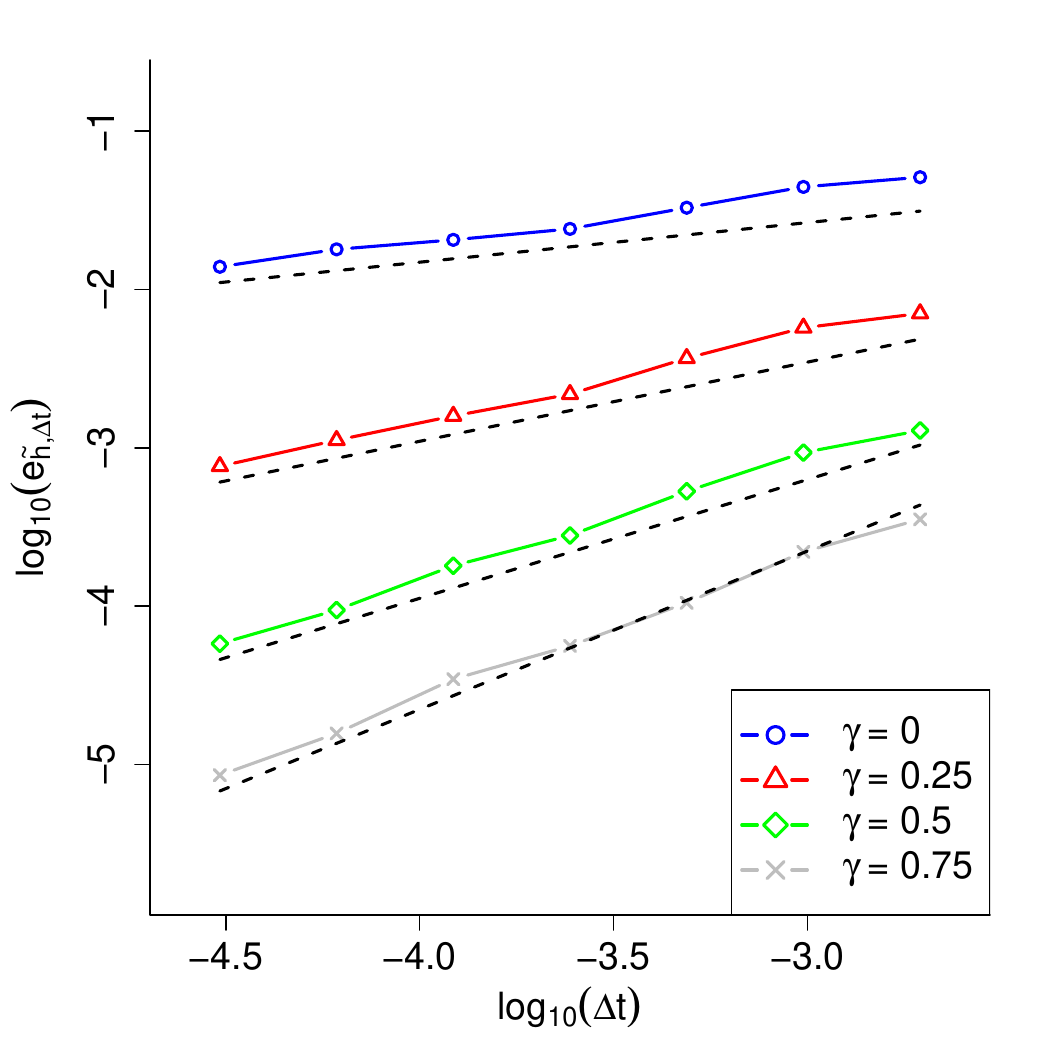}
    } \\
    \subcaptionbox{Dashed lines show rates $\frac{1}{2},1,\frac{3}{2},2$}{
        \includegraphics[width = 6.5cm]{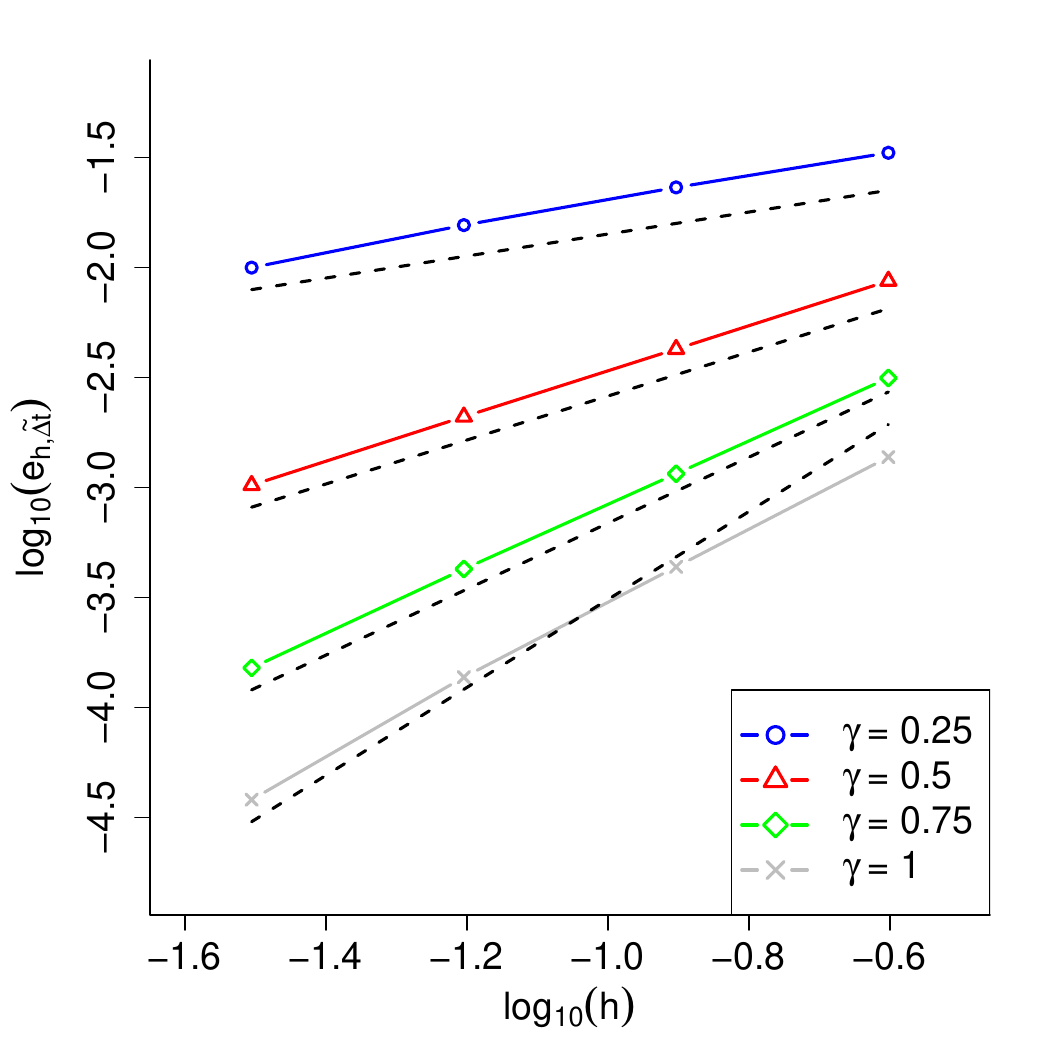}
    }
    \subcaptionbox{Dashed lines show rates $\frac{1}{4},\frac{1}{2},\frac{3}{4},1$.}{
        \includegraphics[width = 6.5cm]{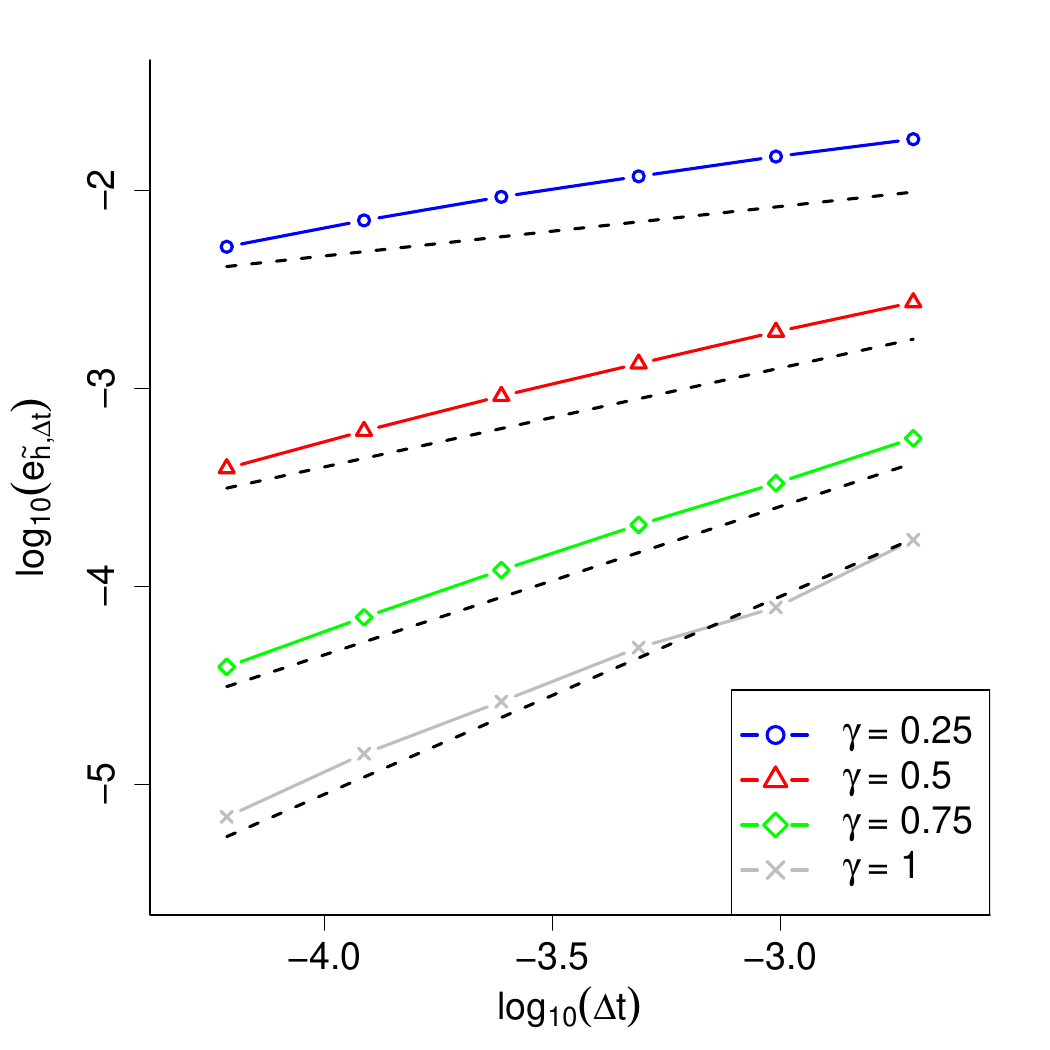}
    }
    \caption{Relative pathwise errors. Row 1 and 2 corresponds to Example 1 and 2, while column 1 and 2 show rates in space and time, respectively. The dashed lines show corresponding theoretical rates. \label{fig:rates}} 
\end{figure}

\addcontentsline{toc}{section}{References}
\bibliographystyle{abbrv}
\bibliography{refs.bib}

\appendix
\section{}\label{app:A}

\begin{proof}[Proof of lemma \ref{lemma:metric}]
That $d_p$ is a metric follows from the Minkowski inequality for integrals.

First we show that $x_n \xrightarrow{P} x \implies d_{p}(x_n,x)\to 0$. Since $x_n$ converges to $x$ in probability, there exists a subsequence, $\{ x_{n_k} \}_k$ converging to $x$ almost surely. Hence, $1 \wedge d(x_{n_k},x)^p$ converges to $0$ in $L^1(\Omega ; \mathbb{R})$ by dominated convergence theorem. Note that for any subsequence $\{ x_{n_k} \}_k$ we can find such a subsequence. Therefore, since convergence in $L^1(\Omega; \mathbb{R})$ is in a topology, the original sequence $1 \wedge d(x_n,x)^p$ has to converge to $0$ in $L^1(\Omega ; \mathbb{R})$, and therefore also in our metric.

For the other direction, $d_p(x_n,x) \to 0 \implies x_n \xrightarrow{P} x$, we note that since $1\wedge d(x_n,x)^p \to 0$ in $L^1(\Omega ; \mathbb{R})$, we must have that it also converges in probability. Therefore also $d(x_n, x)$ converges to $0$ in probability, and the implication follows. 

It remains to show completeness. For this it suffices to show that any metric that gives the topology of convergence in probability is complete. Thus, it suffices to show it only for the metric $d_1(x,y)$. To do so, we use the Cauchy summability characterization of completeness: that is, $(L^0(\Omega ; X), d_1)$ is complete if and only if it has the property that for every sequence $\{x_n \}_n \subseteq L^0(\Omega ; X)$, where,
\begin{align*}
    \sum_{n = 1}^{\infty} d_1(x_n,x_{n+1}) < \infty,
\end{align*}
we must have $\lim_{n \to \infty} x_n \in L^0(\Omega ; X)$. Note that if,
\begin{align*}
    \sum_{n = 1}^{\infty} d_1(x_n,x_{n+1}) = \sum_{n = 1}^{\infty} E[1 \wedge d(x_n, x_{n+1})] = E[\sum_{n = 1}^{\infty} 1 \wedge d(x_n, x_{n+1})] < \infty,
\end{align*}
we must have that $\sum_n d(x_n, x_{n+1}) < \infty$ for almost every $\omega$. Therefore, for almost every $\omega$ we must have that $x_n(\omega) \to x(\omega)$, by the completeness of $X$, and so $x_n \to x$ almost surely, and therefore also converges in probability. It follows that $x \in L^0(\Omega ; X)$, since it is an almost sure limit of measurable functions, and completeness follows. 
\end{proof}

\begin{proof}[Proof of Lemma \ref{lemma:continuity-test}]
    Assume for simplicity $T = 1$. Note that
    \begin{align*}
        &P(\limsup_n \{\max_{j=0, \dots, 2^n-1} \frac{d(x((j+1)2^{-n}), x(j2^{-n}))}{(2^{-n})^{\beta}} > 1 \}) \\
        &\qquad = P(\limsup_n \{\max_{j=0,\dots,2^n-1} \frac{d(x((j+1)2^{-n}), x(j2^{-n}))^{\delta}}{(2^{-n})^{\beta \delta}} > 1 \}) \\
        &\qquad \leq P(\limsup_n \{\sum_{j=0}^{2^n - 1} \frac{d(x((j+1)2^{-n}), x(j2^{-n}))^{\delta}}{(2^{-n})^{\beta \delta}} > 1 \}) \\
        &\qquad \leq P(\{\sum_{m \geq n} \sum_{j=0}^{2^m - 1} \frac{d(x((j+1)2^{-m}), x(j2^{-m}))^{\delta}}{(2^{-m})^{\beta \delta}} > 1 \}) \\
        &\qquad \leq E[1 \wedge \sum_{m \geq n} \sum_{j=0}^{2^m - 1} \frac{d(x((j+1)2^{-m}), x(j2^{-m}))^{\delta}}{(2^{-m})^{\beta \delta}}] \to 0,
    \end{align*}
    as $n \to \infty$, and so the left hand side is $0$. It follows that there is an integer valued random variable $N$, with $N(\omega) < \infty$ $P$-a.s. such that for all $n \geq N(\omega)$, we have
    \begin{align}\label{eq:pathwise-estiamte-1}
        \max_{j=0,\dots,2^n-1} d(x((j+1) 2^{-n}), x(j 2^{-n})) \leq (2^{-n})^{\beta}. 
    \end{align}
    For $n < N(\omega)$, we have
    \begin{align}\label{eq:pathwise-estiamte-2}
        \max_{j=0,\dots,2^n-1} d(x((j+1)2^{-n}), x(j 2^{-n})) \leq 2^{N(\omega)} (2^{-n})^{\beta}. 
    \end{align}
    Note that for any $0 \leq s < t \leq 1$, we have $t' := j 2^{-\lceil -\log_2(t-s) \rceil} \in [s,t]$ for some $j = 0, \dots, 2^{\lceil -\log_2(t-s) \rceil}$, and 
    \begin{align*}
        d(x(t), x(s)) \leq d(x(t), x(t')) + d(x(t'), x(s)),
    \end{align*}
    where we can express $t = t' + \sum_{j = j_0}^{\infty} \epsilon_j 2^{-j}$, $\epsilon_j \in \{ 0, 1 \}$, with $j_0 := -\lceil -\log_2(t-s) \rceil$. Therefore, using \eqref{eq:pathwise-estiamte-1} and \eqref{eq:pathwise-estiamte-2}, we get
    \begin{align*}
        d(x(t), x(t')) &\leq d(x(t' + \epsilon_{j_0} 2^{-j_0}), x(t')) + \sum_{j = j_0}^{\infty} d(x(t' + \sum_{n=j_0}^{j+1} \epsilon_n 2^{-n}), x(t' + \sum_{n = j_0}^j \epsilon_n 2^{-n})) \\
        &\leq \sum_{j = j_0}^{\infty} 2^{N(\omega)} 2^{-\beta j} \\
        &\leq 2^{N(\omega)} (1 - 2^{-\beta}) (t-s)^{\beta},
    \end{align*}
    and similarly for $d(x(t'), x(s))$. This gives,
    \begin{align*}
        d(x(t), x(s)) \leq 2^{N(\omega) + 1} (1 - 2^{-\beta}) \vert t - s \vert^{\beta}. 
    \end{align*}
\end{proof}
\section{Properties of analytic semigroups and their generators}\label{app:B}

\begin{lemma}\label{lemma:analytic-semigroup}
    Let $-A$ be the generator of an analytic semigroup on a Banach space $X$, denoted $S(\cdot)$, such that for some $\lambda \geq 0$ the real part of the spectrum of $\lambda + A$ is strictly positive. Then,
    \begin{enumerate}
        \item[(a)] for any $\alpha \geq 0$ there is $C_{\alpha}, \delta > 0$ such that for any $t > 0$ and $u \in X$
        \begin{align*}
            S(t) u \in D((\lambda + A)^{\alpha}) \quad \text{with} \quad \Vert (\lambda + A)^{\alpha} S(t) u \Vert_X \leq C_{\alpha} e^{(\lambda-\delta)t} t^{-\alpha} \Vert u \Vert_X,
        \end{align*}
        with convention $(\lambda + A)^0 = I$, \medskip

        \item[(b)] for $\alpha, \beta \in \mathbb{R}$, $u \in D((\lambda + A)^{\gamma})$ with $\gamma = \max(\alpha, \beta, \alpha + \beta)$,
        \begin{align*}
            (\lambda + A)^{\alpha} (\lambda + A)^{\beta} u = (\lambda + A)^{\beta} (\lambda + A)^{\alpha} u = (\lambda + A)^{\alpha + \beta} u,
        \end{align*}
        
        \item[(c)] for $\alpha \in \mathbb{R}$, $D((\lambda + A)^{\alpha})$ is dense in $X$, \medskip
        
        \item[(d)] for $\alpha \in \mathbb{R}$, $(\lambda + A)^{\alpha} S(t) u = S(t) (\lambda + A)^{\alpha} u$ for $u \in D((\lambda + A)^{\alpha})$, \medskip
        
        \item[(e)] and finally for $\alpha \in [0,1]$,
        \begin{align*}
            \Vert (S(t) - I) u \Vert_X &\leq C e^{\lambda t} t^{\alpha} \Vert (\lambda + A)^{\alpha} u \Vert_X.
        \end{align*}
    \end{enumerate}
\end{lemma}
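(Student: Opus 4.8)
The plan is to reduce everything to the single operator $B := \lambda + A$ and the rescaled semigroup $T(t) := e^{-\lambda t} S(t)$, whose generator is $-B$. The spectral hypothesis furnishes $\delta > 0$ with $\sigma(B) \subseteq \{ z : \mathrm{Re}\, z \geq \delta \}$, so $B$ is a sectorial operator whose spectrum is bounded away from $0$. This is precisely the classical setting in which the fractional powers $B^{\alpha}$ are defined for every $\alpha \in \mathbb{R}$ (via the Balakrishnan/Dunford integrals, cf. \eqref{eq:fractional-powers}), with $B^{-\alpha} \in L(X)$ for $\alpha \geq 0$, and in which the structural identities we need are standard. Concretely, parts (b), (c) and (d) are the law of exponents, the density of $D(B^{\alpha})$, and the commutation of $B^{\alpha}$ with the semigroup; I would invoke the standard theory of fractional powers of sectorial operators for these (see, e.g., \cite{yagi}). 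For (c), when $\alpha \leq 0$ the operator $B^{\alpha}$ is bounded, so $D(B^{\alpha}) = X$, whereas for $\alpha > 0$ density follows from $D(B^{\lceil \alpha \rceil}) \subseteq D(B^{\alpha})$ together with the density of the domain of a generator.

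For (a), the key input is the classical analytic-semigroup smoothing estimate $\Vert B^{\alpha} T(t) \Vert_{L(X)} \leq C_{\alpha}\, t^{-\alpha} e^{-\delta t}$ for $t > 0$, valid because $-B$ generates a bounded analytic semigroup with spectrum bounded away from $0$. Since $S(t) = e^{\lambda t} T(t)$ and $(\lambda + A)^{\alpha} = B^{\alpha}$ commutes with the scalar $e^{\lambda t}$, this gives $S(t) u \in D(B^{\alpha})$ with
\[
\Vert (\lambda + A)^{\alpha} S(t) u \Vert_X = e^{\lambda t} \Vert B^{\alpha} T(t) u \Vert_X \leq C_{\alpha}\, e^{(\lambda - \delta) t}\, t^{-\alpha} \Vert u \Vert_X,
\]
which is exactly the claimed bound.

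For (e), I would first establish the two endpoint cases for $T$ and then interpolate. The case $\alpha = 0$ is immediate from boundedness of $T$, while for $\alpha = 1$ and $u \in D(B)$ the fundamental theorem of calculus for the semigroup yields $(T(t) - I) u = -\int_0^t T(s) B u \, ds$, whence $\Vert (T(t) - I) u \Vert_X \leq C\, t\, \Vert B u \Vert_X$ using $\Vert T(s) \Vert_{L(X)} \leq C$. The moment (interpolation) inequality for fractional powers then gives $\Vert (T(t) - I) u \Vert_X \leq C\, t^{\alpha} \Vert B^{\alpha} u \Vert_X$ for $\alpha \in [0,1]$. Finally I would transfer this to $S$ by writing $S(t) - I = e^{\lambda t}(T(t) - I) + (e^{\lambda t} - 1) I$: the first term is controlled by the bound just obtained, and for the second I would use boundedness of $B^{-\alpha}$ to write $\Vert u \Vert_X \leq C \Vert B^{\alpha} u \Vert_X$ together with $e^{\lambda t} - 1 \leq C_{T}\, e^{\lambda t} t^{\alpha}$ for $t \in [0,T]$.

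The routine parts are the citations for (b)--(d); the only genuine bookkeeping is the passage between $S$ and $T = e^{-\lambda \cdot} S$ in (a) and (e). I expect the main obstacle to be the interpolation step in (e): one must either quote the moment inequality in a form that directly produces the $t^{\alpha}$ scaling, or argue via the integral representation of $(T(t)-I)B^{-\alpha}$. Care is also needed so that the additive $(e^{\lambda t}-1)I$ term does not spoil the clean $t^{\alpha}$ rate, which is precisely where the boundedness of $B^{-\alpha}$ and the restriction to $t \in [0,T]$ enter.
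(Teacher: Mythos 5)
Your proposal is correct and follows essentially the same route as the paper, which simply cites the standard theory (Pazy, Chapter 2, Theorems 6.8 and 6.13) for (a)--(d) and notes that (e) follows from the $\lambda = 0$ case by the rescaling $T(t) = e^{-\lambda t}S(t)$ that you spell out explicitly. Your more detailed treatment of the interpolation step in (e) and the bookkeeping for the $(e^{\lambda t}-1)I$ term is sound and consistent with what the paper's citations deliver.
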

\begin{proof}
    The first four statements can be found in, e.g., Chapter 2 (Theorem 6.8 and 6.13) in \cite{pazy}. The last one follows by similar arguments as in the case of $\lambda = 0$ (see, e.g., \cite{2024-auestad}). 
\end{proof}

The following lemma describes convergence of the fully discrete approximation $S_{h,\Delta t}(\cdot)$ of $S_1(\cdot)$.
\begin{lemma}\label{lemma:fully-discrete-semigroup-approximation}
    Let $S_{h,\Delta t}(t)$ be given as in \eqref{eq:fully-discrete-semigroup-operator}. Under Assumption \ref{assumption:model} and \ref{assumption:fem}, there are $C, c, \delta > 0$ such that for any $\theta \in [0,2]$, and $\rho \in [-1, \theta] \cap [-2 + \theta, \theta]$
    \begin{align*}
        \Vert (S_1(t) - S_{h,\Delta t}(t) \pi_h ) u \Vert_H &\leq C e^{c (\lambda-\delta) t} t^{-\theta/2 + \rho/2} (h^{\theta} + \Delta t^{\theta / 2}) \Vert (\lambda + A_1)^{\rho/2} u \Vert_H.
    \end{align*}
\end{lemma}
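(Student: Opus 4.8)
The plan is to split the error into a spatial and a temporal contribution, estimate each at two ``endpoint'' data regularities, and recover the full range of $(\theta,\rho)$ by interpolation together with the parabolic smoothing property. Throughout I would work with the shifted generators $\lambda + A_1$ and $\lambda + A_{1,h}$, whose spectra lie in a common sector in the right half-plane (uniformly in $h$) by \ref{cond:continuity-coercivity} and \ref{cond:discrete-operator}, and I would freely use Lemma \ref{lemma:analytic-semigroup}(a) together with the discrete smoothing bounds $\Vert (\lambda+A_{1,h})^a e^{-tA_{1,h}} \Vert_{L(H)} \leq C e^{(\lambda-\delta)t} t^{-a}$ (uniform in $h$) and $\Vert (\lambda+A_{1,h})^a r(\Delta t A_{1,h})^n \Vert_{L(H)} \leq C e^{c(\lambda-\delta)t_n} t_n^{-a}$ (uniform in $h,\Delta t$). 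The exponential factor $e^{c(\lambda-\delta)t}$ is exactly what absorbs the growth produced by undoing the shift: for $\lambda\Delta t$ small one has the rescaling $r(\Delta t A_{1,h}) = (1-\lambda\Delta t)^{-1} r\bigl(\tfrac{\Delta t}{1-\lambda\Delta t}(\lambda+A_{1,h})\bigr)$, which reduces stability and smoothing for $r(\Delta t A_{1,h})^n$ to the case of the positive operator $\lambda+A_{1,h}$ at the cost of a factor $(1-\lambda\Delta t)^{-n}\leq C e^{c\lambda t_n}$.

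First I would introduce the continuous-in-time semidiscrete semigroup $S_h(t):=e^{-tA_{1,h}}$ and decompose
\[
 S_1(t) - S_{h,\Delta t}(t)\pi_h = \bigl(S_1(t) - S_h(t)\pi_h\bigr) + \bigl(S_h(t) - S_{h,\Delta t}(t)\bigr)\pi_h =: E_1(t) + E_2(t).
\]
For the spatial error $E_1$, the two endpoints are the smooth-data bound $\Vert E_1(t)u\Vert_H \leq C e^{c(\lambda-\delta)t} h^{\theta} \Vert (\lambda+A_1)^{\theta/2} u \Vert_H$ (the case $\rho=\theta$, no $t$-singularity) and the two-orders-rougher bound corresponding to $\rho=\theta-2$. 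I would obtain these by writing $E_1(t)$ as a Dunford--Taylor contour integral of the resolvent difference and bounding the integrand with the $\alpha\in\{0,1/2\}$ cases of \ref{cond:solution-operator}, extended along the sector via the resolvent identity, together with the uniform norm-equivalence \ref{cond:fem-bound}; equivalently one may route through the Ritz projection and the discrete smoothing estimate above, which aligns more directly with the elliptic form of \ref{cond:solution-operator}.

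For the temporal error $E_2$, the point is that backward Euler is the rational approximation $r(z)=(1+z)^{-1}$ of $e^{-z}$, satisfying on the (shifted) sector the stability $|r(z)^n|\leq C$ and the consistency $|r(z)-e^{-z}|\leq C|z|^2$ near $0$. Telescoping $e^{-t_kA_{1,h}} - r(\Delta t A_{1,h})^k$ into single-step errors and inserting the discrete smoothing bound then yields the endpoint estimate $\Vert E_2(t)u\Vert_H \leq C e^{c(\lambda-\delta)t} t^{-\theta/2+\rho/2}\Delta t^{\theta/2}\Vert (\lambda+A_1)^{\rho/2}u\Vert_H$ at $\rho=\theta$ and $\rho=\theta-2$. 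With both endpoints in hand for $E_1$ and $E_2$, the intermediate values of $\rho$ follow by interpolation: the smoothing property lets one move $(\lambda+A_1)^{(\theta-\rho)/2}$ onto the semigroup at the cost of the factor $t^{-(\theta-\rho)/2}$, and the constraint $\rho\geq -1$ is precisely the limit down to which the comparison bounds \ref{cond:solution-operator} and \ref{cond:fem-bound} remain available. Adding $E_1$ and $E_2$ produces the asserted $h^{\theta}+\Delta t^{\theta/2}$ factor.

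The main obstacle I anticipate is establishing the fully discrete smoothing and stability estimates for $r(\Delta t A_{1,h})^n$ uniformly in both $h$ and $\Delta t$, and then tracking the $t$-singularity $t^{-\theta/2+\rho/2}$ through the nonsmooth-data regime ($\rho<\theta$), where the parabolic smoothing must exactly compensate for the missing regularity of the data. These are the parts closest in spirit to the companion analysis, so I would import the routine resolvent and summation estimates from \cite{2025-auestad} and confine the new work to keeping the shift-induced exponential factors explicit.
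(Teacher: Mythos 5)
The paper does not prove this lemma directly: its entire ``proof'' is a citation of Theorems 2.14 and 2.24 in \cite{2024-auestad} and Lemma 4.1 in \cite{2025-auestad}, which carry out precisely the Thom\'ee-style nonsmooth-data error analysis you describe. Your outline --- splitting into the spatial error $S_1(t)-S_h(t)\pi_h$ and the temporal error $(S_h(t)-S_{h,\Delta t}(t))\pi_h$, proving endpoint bounds at $\rho=\theta$ and $\rho=\theta-2$ via the resolvent/Ritz-projection route using \ref{cond:solution-operator} and \ref{cond:fem-bound}, handling backward Euler through the stability and order-two consistency of $r(z)=(1+z)^{-1}$, and recovering intermediate $\rho$ by smoothing/interpolation --- is the standard argument underlying those cited results, and the device $r(\Delta t A_{1,h})=(1-\lambda\Delta t)^{-1}r\bigl(\tfrac{\Delta t}{1-\lambda\Delta t}(\lambda+A_{1,h})\bigr)$ is a legitimate way to make the shift-induced factor $e^{c(\lambda-\delta)t}$ explicit. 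So your approach is essentially the same as the paper's, with the caveat that, like the paper, you ultimately defer the genuinely technical content (uniform-in-$h,\Delta t$ discrete smoothing and the summation estimates in the nonsmooth-data regime) to the companion references rather than proving it; as a blind reconstruction of what those references establish, the plan is sound and contains no wrong step.
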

\begin{proof}
    This follows by Theorem 2.14 and 2.24 in \cite{2024-auestad}. For the details, see, e.g., Lemma 4.1 in \cite{2025-auestad}.
\end{proof}

\begin{lemma}\label{lemma:A_h-quadrature}
Let $A_{2,h}$ be as in \ref{cond:discrete-operator}, $Q_k^{-\gamma}$ be given by \eqref{eq:quadrature}, and $\gamma \in (0,1)$. Then, for some $C > 0$ independent of $k, h \in (0,1)$, the following estimate holds,
\begin{align}
    \Vert (A_{2,h}^{-\gamma} - Q_k^{-\gamma}(A_{2,h})) u \Vert_H \leq C e^{-\frac{\pi^2}{2k}} \Vert u \Vert_H, \quad \text{for any } u \in V_h. 
\end{align}
\end{lemma}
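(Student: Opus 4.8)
The plan is to reduce the operator estimate to a scalar sinc-quadrature error estimate through the Balakrishnan (Dunford--Taylor) integral representation. Since $(A_{2,h}u,v)_H = a_2(u,v)$ for $u,v \in V_h$ by \ref{cond:discrete-operator}, the coercivity and continuity of $a_2$ from \ref{cond:continuity-coercivity} make $\{A_{2,h}\}_h$ a uniformly sectorial family of operators on the finite-dimensional spaces $V_h$, whose spectra lie in a fixed sector $\{ z : |\arg z| \leq \omega\}$, $\omega < \pi/2$, with real part bounded below by a constant $c_0 > 0$ independent of $h$. Consequently $A_{2,h}^{-\gamma}$ admits the representation
\begin{align*}
A_{2,h}^{-\gamma} = \frac{\sin(\pi\gamma)}{\pi}\int_0^{\infty}\mu^{-\gamma}(\mu I + A_{2,h})^{-1}\,d\mu = \frac{\sin(\pi\gamma)}{\pi}\int_{-\infty}^{\infty}e^{(1-\gamma)y}(e^{y}I + A_{2,h})^{-1}\,dy,
\end{align*}
where the second equality uses the substitution $\mu = e^{y}$. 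Writing $F(y) := e^{(1-\gamma)y}(e^{y}I + A_{2,h})^{-1}$, the quadrature \eqref{eq:quadrature} is precisely the truncated trapezoidal (sinc) rule $\tfrac{\sin(\pi\gamma)}{\pi}\,k\sum_{j=-M}^{N}F(y_j)$ with step $k$ and nodes $y_j = jk$, so that $A_{2,h}^{-\gamma} - Q_k^{-\gamma}(A_{2,h})$ is the sum of the trapezoidal discretization error on $\mathbb{R}$ and the truncation error from discarding the nodes outside $[-Mk,Nk]$.

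Second, I would bound the integrand uniformly in $h$. Uniform sectoriality and positivity give $\Vert (e^{y}I + A_{2,h})^{-1}\Vert_{L(H)} \leq C\min(1, e^{-y})$, hence $\Vert F(y)\Vert_{L(H)} \leq C e^{(1-\gamma)y}$ for $y \leq 0$ and $\Vert F(y)\Vert_{L(H)} \leq C e^{-\gamma y}$ for $y \geq 0$, with $C$ independent of $h$. Comparing the discarded trapezoidal nodes to the corresponding tail integrals (using $k/(1-e^{-\gamma k}) \leq C/\gamma$ and the analogue at the other end), the truncation error is controlled by $C(e^{-(1-\gamma)Mk} + e^{-\gamma Nk})$. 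The choices $N = \lceil \pi^2/(2\gamma k^2)\rceil$ and $M = \lceil \pi^2/(2(1-\gamma)k^2)\rceil$ are made precisely so that $\gamma Nk \geq \pi^2/(2k)$ and $(1-\gamma)Mk \geq \pi^2/(2k)$, whence the truncation error is at most $C e^{-\pi^2/(2k)}$.

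Third, for the discretization error I would extend $F$ holomorphically into the horizontal strip $\{ y : |\mathrm{Im}\,y| < d\}$: the resolvent $(e^{y}I + A_{2,h})^{-1}$ stays analytic as long as $-e^{y}$ avoids the sector containing $\sigma(A_{2,h})$, which holds for $d = \pi - \omega > \pi/2$ uniformly in $h$, and $F$ remains integrable along each horizontal line in the strip by the same resolvent bounds. The classical trapezoidal-rule error estimate for functions analytic and decaying in such a strip (see \cite{2019-bonito} and the references therein) then yields a discretization error of order $e^{-2\pi d/k} \leq e^{-\pi^2/k}$, which is subordinate to the truncation error; adding the two contributions gives the stated bound $C e^{-\pi^2/(2k)}$ with $C$ independent of $h$ and $k$. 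The main obstacle is establishing the resolvent estimates and the holomorphic extension uniformly in $h$ for the \emph{non-self-adjoint} operators $A_{2,h}$: one must verify that the sector half-angle $\omega$ and the lower spectral bound $c_0$ coming from the coercivity and continuity constants of $a_2$ do not degenerate as $h \to 0$, so that both the strip half-width $d$ and the integrand decay rates are uniform. Once this uniformity is secured, the remaining truncation and quadrature estimates are routine.
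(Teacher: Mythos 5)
Your proposal is correct and is essentially the argument behind the result the paper invokes: the paper's own proof simply cites Theorem 3.2 of \cite{2019-bonito}, whose content is exactly the sinc-quadrature analysis of the Balakrishnan integral you sketch (truncation error controlled by the choice of $M,N$, discretization error by analyticity of the resolvent in a horizontal strip). Your identification of the key uniformity issue matches the paper's remark that the constant depends only on $\gamma$ and the coercivity and continuity constants of $(A_{2,h}\cdot,\cdot)_H$ on $(V_h,\Vert\cdot\Vert_{H^1(\mathcal{D})})$, which coincide with those of $a_2$ and hence do not degenerate as $h \to 0$.
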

\begin{proof}
    This follows by Theorem 3.2 in \cite{2019-bonito}. As stated in Theorem 3.2 in \cite{2019-bonito}, the constant $C$ depends only on $\gamma$ and the continuity and coercivity constant of $(A_{2,h} \cdot, \cdot)_H$ on $(V_h, \Vert \cdot \Vert_{H^1(\mathcal{D})})$, which are the same as that of $a_2$.
\end{proof}

\end{document}